\newtheorem{theor}{Theorem}[section]
\newtheorem{prop}[theor]{Proposition}
\newtheorem{coro}[theor]{Corollary}
\newtheorem{lemma}[theor]{Lemma}
\newtheorem{remark}[theor]{Remark}
\newenvironment{proof}
	{\par {\bf Proof:}}
 	{\hfill $\square$ \medskip}
\newcommand{\esup}{\mathop{\mbox{\rm ess\,sup}}}
\newcommand{\einf}{\mathop{\mbox{\rm ess\,inf}}}
\def\mc{\mathcal}
\def\ds{\displaystyle}
\def\be{\begin{equation}}
\def\ee{\end{equation}}
\def\F{{\mathcal F}}
\def\G{{\mathcal G}}
\def\H{{\mathcal H}}
\def\NN{{\mathfrak N}}
\def\RR{{\mathcal R}}
\def\N{\mathbb N}
\def\Z{\mathbb Z}
\def\R{\mathbb R}
\def\C{\mathbb C}
\def\D{\mathbb D}
\def\I{\mathbb I}
\def\J{\mathbb J}
\def\om{\omega}
\def\<{\langle}
\def\>{\rangle}
\def\bu{\mathbf u}
\def\bv{\mathbf v}
\def\bof{\mathbf f}
\title{Wavelet frames: Spectral techniques and extension principles}
\author{F. G\'omez-Cubillo$^1$, S. Villullas$^2$}
\begin{document}

\maketitle
\begin{center}
{\small $^1$Dpto de An\'alisis Matem\'atico, IMUVa, Universidad de Valladolid, Facultad de Ciencias, 47011 Valladolid, Spain. fgcubill@am.uva.es.} 

{\small $^2$Dpto de Econom\'{\i}a, Universidad Carlos III de Madrid,
C/Madrid 126, 28903 Getafe (Madrid), Spain. svillull@uc3m.es.}
\end{center}

\begin{abstract}
This work characterizes (dyadic) wavelet frames for $L^2({\mathbb R})$ by means of spectral techniques. These techniques use decomposability properties of the frame operator in spectral representations associated to the dilation operator. 
The approach is closely related to usual Fourier domain fiberization techniques, dual Gramian analysis and extension principles, which are described here on the basis of the periodized Fourier transform.
In a second paper of this series, we shall show how the spectral formulas obtained here permit us to calculate all the tight wavelet frames for $L^2(\R)$ with a fixed number of generators of minimal support. 

\smallskip
%\noindent
{\bf Keywords:} wavelet frames, spectral techniques, extension principles

\smallskip
%\noindent
{\bf 2010 MSC:} 42C15, 47B15
\end{abstract}

%-------------------------------
\section{Introduction}\label{s1}

Let $\H$ be a separable Hilbert space with norm $||\cdot||_\H$ and scalar product $\<\cdot,\cdot\>_\H$ (linear in the first component and conjugate-linear in the second).  A countable subset $X$ of $\H$ is called a {\bf frame} for $\H$ if there exist constants $A,B>0$ such that the following inequalities hold:
\be\label{c5.2}
A\,||f||^2\leq \sum_{x\in X} |\<f,x\>_\H|^2\leq B\,||f||^2,\quad (f\in\H).
\ee
It is well known (see, e.g., \cite[Chapter 5]{Chr}) that $X$ is a frame for $\H$ if and only if the corresponding {\it synthesis operator}
\be\label{c5.3}
T_X:l^2(X)\to \H,\quad T_X\{c_x\}_{x\in X}:=\sum_{x\in X} c_x\,x
\ee
is well-defined and bounded from $l^2(X)$ onto $\H$. In such case, the {\it analysis operator} is the adjoint operator of $T_X$, given by
$T_X^*:\H\to l^2(X)$, $T_X^*f=\{\<f,x\>_\H\}_{x\in X}$,
and the {\bf frame operator} $S=T_XT_X^*$,
\be\label{c5.5}
S:\H\to \H,\quad Sf=T_XT_X^*f=\sum_{x\in X}\<f,x\>_\H\,x\,,
\ee
is bounded, positive and invertible. Furthermore, $S^{-1}X=\{S^{-1}x:x\in X\}$ is also a frame, called the {\it canonical dual frame} of $X$, and the ``perfect reconstruction formula'' 
\be\label{c5.7}
f=\sum_{x\in X} \<f,S^{-1}x\>_\H\,x\,,\quad (f\in\H)\,,
\ee
is satisfied. In (\ref{c5.5}) and (\ref{c5.7}) the series converge unconditionally for all $f\in\H$, i.e., for every permutation of the summands the resulting series is convergent.
For a frame $X$, the sharpest possible constants $A,B$ in (\ref{c5.2}) are $A=||S^{-1}||^{-1}$ and $B=||S||=||T_X||^2=||T_X^*||^2$ and are usually referred to as the {\it frame bounds} . 
A frame $X$ is called a {\bf tight frame} if its frame bounds coincide. 

In this work we focus attention on $\H=L^2(\R)$ and a special type of subsets, the {\bf (dyadic) wavelet systems} of $L^2(\R)$ of the form
\be\label{ws}
X=X_\Psi:=\big\{\psi_{k,j}:=D^kT^j\psi: \psi\in\Psi,\,k,j\in\Z\big\},
\ee
where $\Psi$ is a finite or countable family of $L^2(\R)$, and $T$ and $D$ are the translation and (dyadic) dilation operators on $L^2(\R)$ defined by 
\be\label{tdo}
[Tf](x):=f(x-1)\,,\quad [ D f](x):=2^{1/2}\,f(2x)\,,\quad  (f\in L^2(\R),\,x\in\R)\,.
\ee 
Wavelet systems of the form (\ref{ws}) which are frames for $L^2(\R)$ are called {\bf wavelet frames} for $L^2(\R)$ with {\it generator set} $\Psi$.

Wavelet frames for $L^2(\R^d)$ have been extensively studied in the last three decades. One of the main lines of study involves fiberization techniques \cite{RS95,RS97b,RS97a,RS98,RS99,DRS03,RS05}, translation (shift) invariant subspaces \cite{BVR94b,RS95} and refinable functions \cite{BVR93,BeLi98}, leading to the {\it unitary extension principle} (UEP) \cite{RS97b,RS97a,CH00,BT01}, subsequently extended in the form of {\it oblique extension principle} (OEP) \cite{CHS02,DRS03,H10,AMS14,APS16} and {\it duality principle} \cite{FHS16,FJS16}. See the cited references for an exhaustive overview.

Here we present an alternative approach to study wavelet frames for $L^2(\R)$, an approach already introduced in \cite{GS11b,GS11a,GSV12,GV18} for the analysis of orthonormal wavelets.
The adjective ``spectral'' for the techniques we develop comes from the use of suitable spectral representations for the translation and dilation operators $T$ and $D$. Such spectral representations are given in section \ref{sect2}. 

Section \ref{sect3} contains the main results of the work.
Theorem \ref{th1} characterizes the wavelet systems $X$ of the form (\ref{ws}) that are frames and tight frames for $L^2(\R)$. The result is based on the fact that the frame operator $S=T_XT_X^*$ is decomposable (diagonal for tight frames) in the direct integral associated to the spectral representation of the dilation operator $D$.
The matrix elements and fibers of the decomposable expression of $S$ are given in theorem \ref{th2}. Theorems \ref{th1} and \ref{th2} lead to an affordable description of tight wavelet frames in corollary \ref{coro10}.
Let us note that the usual fiberization techniques work on the Fourier domain, i.e., on a spectral representation of the translation operator $T$, in spite of wavelet systems of the form (\ref{ws}) are dilation invariant but not translation (shift) invariant. In order to have a decomposable expression of the frame operator in the Fourier domain, it is necessary to extend the wavelet system to a shift invariant one, the so-called {\it quasi-affine system}; see remark \ref{rm7} for details. 

In section \ref{s5} we translate some usual concepts and results in the theory of wavelet frames to the spectral framework. This is done by means of the {\it periodized Fourier transform} given in proposition \ref{ptft1}. So, for example, the {\it bracket product} used dealing with shift-invariant systems is actually a scalar product (see remark \ref{rm39}) and every element of the principal shift-invariant subspace generated by a function $\phi\in L^2(\R)$ is in fact ``colineal to $\phi$" (proposition \ref{psis}). These results are of interest dealing with refinable functions $\phi$ and associated multiresolution analyses to derive {\it extension principles}. A version of the UEP is given in theorem \ref{tuep} and versions of the OEP can be found in corollary \ref{ll45cx} and theorem \ref{oepams}.

A second part of this work \cite{GV19-2} shall show how corollary \ref{coro10} can be used  to obtain all the tight wavelet frames for $L^2(\R)$ with a fixed number of generators $\psi\in\Psi$ of minimal support. 
Like in \cite{GS11b}, Hardy classes of vector-valued functions and operator-valued inner functions will play a central role.

%--------------------------
\section{Spectral representations for $T$ and $D$}\label{sect2}

We begin by introducing spectral representations for the translation and dilation operators, $T$ and $D$,  defined on $L^2(\R)$ by (\ref{tdo}). These representations have already been considered in \cite{GS11b,GS11a,GSV12,GV18}. They live in spaces of the form $L^2(\partial {\D};\H)$ we next define. 

Let ${\D}$ denote the open unit disc of the complex plane $\C$ and $\partial {\D}$ its boundary:
$$
{\D}:=\{\lambda\in\C:|\lambda|<1\}\,,\qquad \partial {\D}:=\{\omega\in\C:|\omega|=1\}\,.
$$
In $\partial {\D}$ interpret measurability in the sense of Borel and consider the normalized Lebesgue measure $d\omega/(2\pi)$.
Given a separable Hilbert space $\H$, let $L^2(\partial {\D};\H)$ denote the set of all measurable functions $\bv:\partial {\D}\to \H$ such that 
$$\int_{\partial {\D}} ||\bv(\omega)||^2_\H\,\frac{d\omega}{2\pi}<\infty$$ (modulo sets of measure zero); measurability here can be interpreted either strongly or weakly, which amounts to the same due to the separability of $\H$. The functions in $L^2(\partial {\D};\H)$ constitute a Hilbert space with pointwise definition of linear operations and inner product given by 
$$
\<\bu,\bv\>_{L^2(\partial {\D};\H)}:=\int_{\partial {\D}} \<\bu(\omega),\bv(\omega)\>_\H\,\frac{d\omega}{2\pi}\,,\qquad \big(\bu,\bv\in L^2(\partial {\D};\H)\big)\,.
$$
The space $L^2(\partial {\D};\H)$ is a particular case of direct integral of Hilbert spaces; see \cite[Chapter 14]{KRII97} for details.
%$$
%L^2(\partial {\D};\H)=\int_{\partial {\D}}^\oplus \H\,\frac{d\om}{2\pi}\,.
%$$

A bounded operator $S:L^2(\partial {\D};\H)\to L^2(\partial {\D};\H)$ is said to be {\it decomposable} when there is a function $\om\mapsto S(\om)$ on $\partial {\D}$ such that $S(\om):\H\to\H$ is a bounded operator and for each $\bu\in L^2(\partial {\D};\H)$, $S(\om)\bu(\om)=[S\bu](\om)$ for almost every (shortly, a.e.) $\om \in \partial {\D}$. For a decomposable operator $S$ we shall write
$$
S=S(\om)\,.%\int_{\partial {\D}}^\oplus S(\om)\,\frac{d\om}{2\pi}\,.
$$
If, in addition, $S(\om)=s(\om)I_\H$, where $I_\H$ is the identity operator on $\H$ and $s:\partial {\D}\to\C$ is a measurable function, we say that $S$ is {\it diagonalizable} and write $S=s(\om)I_\H$. 

For the sake of completeness, the following proposition includes two well-known results on operator theory. Definitions and terminology can be found in the references cited in the proof.

\begin{prop}\label{prop1vn}
Let $S:L^2(\partial {\D};\H)\to L^2(\partial {\D};\H)$ be a bounded operator.
\begin{itemize}
\item[(i)]
A bounded operator $S:L^2(\partial {\D};\H)\to L^2(\partial {\D};\H)$ commutes with every diagonalizable operator if and only if $S$ commutes with the diagonalizable operator $\om I_\H$.
\item[(ii)]
The set of decomposable operators in $L^2(\partial {\D};\H)$ is a von Neumann algebra with abelian commutant coinciding with the family of diagonalizable operators.
\end{itemize}
\end{prop}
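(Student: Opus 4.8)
The plan is to deduce both statements from the classical commutation theory of direct integrals, organised so that everything reduces to the single generating operator $\om I_\H$.

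For part (i) only the ``if'' direction requires an argument, the ``only if'' direction being trivial since $\om I_\H$ is itself diagonalizable (take $s(\om)=\om$). I would begin by noting that $\om I_\H$ is multiplication by the coordinate function, and since $|\om|=1$ on $\partial{\D}$ this operator, call it $U$, is unitary with $U^*=\bar\om I_\H$. If $S$ commutes with $U$, then multiplying $SU=US$ on either side by $U^{-1}=U^*$ gives $U^*S=SU^*$, so $S$ automatically commutes with $\bar\om I_\H$ as well; hence $S$ lies in the commutant of the $*$-algebra generated by $\om I_\H$. Because this commutant is weakly closed, $S$ in fact commutes with the entire von Neumann algebra $W^*(\om I_\H)$. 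The key classical fact to invoke is that $W^*(\om I_\H)$ is exactly the family of diagonalizable operators: on $L^2(\partial{\D})$ multiplication by $\om$ generates the maximal abelian algebra $L^\infty(\partial{\D})$, so on $L^2(\partial{\D};\H)$ the generated algebra is $\{s(\om)I_\H:s\in L^\infty(\partial{\D})\}$. Thus $S$ commutes with every diagonalizable operator.

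For part (ii) I would write $\scr{A}$ for the family of diagonalizable operators and $\scr{S}$ for the family of decomposable operators. A direct check shows that $\scr{A}$ is closed under linear combinations, products and adjoints, contains the identity, and is weakly closed, so it is a von Neumann algebra; it is abelian since multiplications by scalar functions commute pointwise. The heart of the matter is the identity $\scr{S}=\scr{A}'$. The inclusion $\scr{S}\subseteq\scr{A}'$ is immediate: if $S$ has fiber $S(\om)$ and $M=s(\om)I_\H$, then $SM$ and $MS$ both have fiber $s(\om)S(\om)$ a.e., so $SM=MS$. For the reverse inclusion I would use part (i) to replace $\scr{A}'$ by $\{\om I_\H\}'$, whereupon the statement that every operator commuting with $\om I_\H$ is of the form $S(\om)$ for some measurable field is precisely the decomposition theorem for direct integrals.

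With $\scr{S}=\scr{A}'$ in hand, the three assertions of (ii) follow formally. The commutant of any self-adjoint family is a von Neumann algebra, so $\scr{S}$ is one; its commutant is $\scr{S}'=\scr{A}''$, which by the von Neumann bicommutant theorem equals $\scr{A}$ since $\scr{A}$ is already a von Neumann algebra; and $\scr{A}$ is abelian, as noted. The one genuinely nontrivial ingredient is the reverse inclusion $\scr{A}'\subseteq\scr{S}$: recognising an abstract operator that commutes with the diagonal algebra as a bona fide measurable field of operators $S(\om)$ demands the measurability and selection arguments of direct integral theory, which I would cite from \cite[Chapter 14]{KRII97} rather than carry out.
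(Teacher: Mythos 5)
Your argument is correct and follows essentially the same route as the paper, which simply cites the spectral theory of unitary operators for (i) and the direct-integral commutation theorem \cite[th.14.1.10]{KRII97} for (ii); you fill in the routine reductions (passing from $\om I_\H$ to $W^*(\om I_\H)=\{s(\om)I_\H: s\in L^\infty(\partial\D)\}$, the bicommutant computation $\scr{S}'=\scr{A}''=\scr{A}$) while correctly deferring the one hard step, namely that every operator commuting with the diagonal algebra is a measurable field $S(\om)$, to the same source.
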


\begin{proof}
(i) is a consequence of the spectral theory for unitary operators (of constant multiplicity): see, for example, theorems 5.4.8, 6.2.4 and 7.2.1 in \cite{BS}.
(ii) is a particular case of \cite[th.14.1.10]{KRII97}.  
\end{proof}
 
The spectral representations of $T$ and $D$ we consider are given in propositions \ref{ptft} and \ref{pdft}, respectively, where $T$ and $D$ are transformed into diagonalizable operators of the form $\om I_\H$ on suitable spaces $L^2(\partial {\D};\H)$. Proofs and more details can be found in \cite{GS11a}. 
We begin by considering an orthonormal basis (shortly, ONB) $\{L_{i}^{(0)}(x)\}_{i\in\I}$  of $L^2[0,1)$ and ONBs $\{K_{\pm,j}^{(0)}(x)\}_{j\in\J}$ of $L^2[\pm 1,\pm 2)$, where $\I$, $\J$ are denumerable sets of indices (usually, $\N$, $\N\cup\{0\}$ or $\Z$). Obviously, the families
\be\label{lt}
\big\{L_{i}^{(n)}(x):=[T^n L_{i}^{(0)}](x)=L_{i}^{(0)}(x-n)\big\}_{i\in\I,n\in\Z},
\ee
\be\label{kd}
\big\{K_{s,j}^{(m)}(x):=[D^m K_{s,j}^{(0)}](x)=2^{m/2}K_{s,j}^{(0)}(2^mx)\big\}_{j\in\J,m\in\Z,s=\pm}
\ee
are ONBs of $L^2(\R)$ and, for each $f\in L^2(\R)$, one has (in $L^2$-sense)
\be\label{fdobl2}
f=\sum_{i,n} \hat f^{(n)}_{i} L_i^{(n)},\text{ with }\hat f^{(n)}_{i}:=\<f,L_i^{(n)}\>_{L^2(\R)}\,,
\ee
\be\label{fdobl1}
f=\sum_{s,j,m} \tilde f^{(m)}_{s,j} K_{s,j}^{(m)},\text{ with }\tilde f^{(m)}_{s,j}:=\<f,K_{s,j}^{(m)}\>_{L^2(\R)}\,.
\ee
In what follows, fixed ONBs $\{L_{i}^{(n)}(x)\}_{i\in\I,n\in\Z}$ and  $\{K_{s,j}^{(m)}(x)\}_{j\in\J,m\in\Z,s=\pm}$ of $L^2(\R)$ as above, for each $f\in L^2(\R)$ we shall write
$$
f=\big\{\hat f_{i}^{(n)}\big\}=\big\{\tilde f_{s,j}^{(m)}\big\}\,.
$$

A spectral representation for the dilation operator $D$ on $L^2(\R)$ is given in the next result. Here, $l^2(\J)$ denotes the Hilbert space of sequences of complex numbers $(c_j)_{j\in\J}$ such that $\sum_{j\in\J} |c_j|^2<\infty$, $\big\{u_{s,j}\big\}_{j\in\J,s=\pm}$ is a fixed ONB of $l^2(\J)\oplus l^2(\J)$ and $\oplus$ denotes orthogonal sum.

\begin{prop}{\rm\cite[Proposition 1]{GS11a}}\label{pdft}
The operator $\G$ defined by
$$%\be\label{dft}
\begin{array}{rccl}
\G: & L^2(\R) & \longrightarrow & \ds L^2\big(\partial {\D};l^2(\J)\oplus l^2(\J)\big) 
\\[3ex]
& f & \mapsto & \ds \tilde \bof:=\bigoplus_{s=\pm}\bigoplus_{j\in\J} \left[ \sum_{m\in\Z} \om^{m}\,\tilde f_{s,j}^{(m)} \right]\,u_{s,j}\,.  
\end{array}
$$%\ee
determines a spectral model for the dilation operator $D$, i.e., $\G$ is unitary and 
$$%\be\label{di}
\G  D\G^{-1}=\om\,I_{l^2(\J)\oplus l^2(\J)}\,.
%[\G  D f](\om)=\om\,[\G f](\om)\,,\quad \text{ for a.e }\om\in\partial {\D}\text{ and }f\in L^2(\R)\,.
$$%\ee 
\end{prop}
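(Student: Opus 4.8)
The plan is to realize $\G$ as a composition of three elementary unitaries and then to establish the intertwining relation by a one-line computation on coefficients.

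For unitarity I would proceed in three steps. First, since $\{K_{s,j}^{(m)}\}_{s,j,m}$ is an ONB of $L^2(\R)$ by (\ref{kd}), the coefficient map $f\mapsto\{\tilde f_{s,j}^{(m)}\}$ is, by Parseval, a unitary from $L^2(\R)$ onto the sequence space $l^2$ over the index set $\{+,-\}\times\J\times\Z$. Second, I would identify the target $L^2(\partial\D;l^2(\J)\oplus l^2(\J))$ with the Hilbert direct sum $\bigoplus_{s,j}L^2(\partial\D)$: writing the fiber components of $\bv$ in the fixed ONB $\{u_{s,j}\}$ as $v_{s,j}(\om)=\langle\bv(\om),u_{s,j}\rangle$, Tonelli's theorem gives $\|\bv\|^2=\sum_{s,j}\|v_{s,j}\|^2_{L^2(\partial\D)}$. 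Third, I would recall that $\{\om^m\}_{m\in\Z}$ is an ONB of $L^2(\partial\D)$ for the normalized measure $d\om/(2\pi)$, so on each fiber the map $\{c_m\}_m\mapsto\sum_m c_m\om^m$ is the inverse Fourier-series unitary $l^2(\Z)\to L^2(\partial\D)$. Composing these three maps sends $f$ to the element with $(s,j)$-fiber $\sum_m\om^m\tilde f_{s,j}^{(m)}$, which is exactly the stated formula; being a composition of unitaries, $\G$ is unitary.

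For the intertwining relation the key is the action of $D$ on the coefficients. As $D$ is unitary on $L^2(\R)$ and $K_{s,j}^{(m)}=D^mK_{s,j}^{(0)}$, one has
$$
\widetilde{(Df)}_{s,j}^{(m)}=\langle Df,D^mK_{s,j}^{(0)}\rangle_{L^2(\R)}=\langle f,D^{m-1}K_{s,j}^{(0)}\rangle_{L^2(\R)}=\tilde f_{s,j}^{(m-1)},
$$
so $D$ merely shifts the index $m$. Substituting this into the formula for $\G$ and reindexing the inner sum factors out a single power of $\om$, yielding $[\G Df](\om)=\om\,[\G f](\om)$, that is, $\G D\G^{-1}=\om\,I_{l^2(\J)\oplus l^2(\J)}$. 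The only step requiring genuine care is the fiberwise decomposition of the norm in the second step above, namely the interchange of the sum over $(s,j)$ with the integral over $\partial\D$; but this is settled cleanly by Tonelli together with the separability of the fiber (already noted in the text), so I anticipate no real obstacle.
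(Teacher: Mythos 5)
Your proof is correct, and it is the standard argument: the paper itself does not reprove this proposition (it defers to \cite[Proposition 1]{GS11a}), and the argument given there is exactly your factorization of $\G$ through Parseval for the ONB $\{K_{s,j}^{(m)}\}$, the identification $L^2(\partial\D;l^2(\J)\oplus l^2(\J))\cong\bigoplus_{s,j}L^2(\partial\D)$, and the Fourier-series unitary on each fiber, with $D$ acting as the bilateral shift $m\mapsto m+1$ on coefficients so that it becomes multiplication by $\om$. No gaps; the Tonelli step and the reindexing are handled correctly.
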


Now, for the translation operator $T$ on $L^2(\R)$, if $\big\{u_{i}\big\}_{i\in\I}$ is a fixed ONB of $l^2(\I)$, one has:

\begin{prop}{\rm \cite[Proposition 3]{GS11a}}\label{ptft}
The operator $\F$ given by
$$%\be\label{tft}
\begin{array}{rccl}
\F: & L^2(\R) & \longrightarrow & \ds L^2\big(\partial {\D};l^2(\I)\big)
\\[3ex]
& f & \mapsto & \hat \bof:=\ds \bigoplus_{i\in\I} \left[ \sum_{n\in\Z} \om^{n}\,\hat f_{i}^{(n)} \right]\,u_i\,,
\end{array}
$$%\ee
determines a spectral model for the translation operator $T$, i.e., $\F$ is unitary and 
$$%\be\label{dit}
\F T \F^{-1}=\om\,I_{l^2(\I)}\,.
%[\F T f](\om)=\om\,[\F f](\om)\,,\quad \text{ for a.e }\om\in\partial {\D}\text{ and }f\in L^2(\R)\,.
$$%\ee 
\end{prop}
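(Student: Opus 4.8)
The plan is to realize $\F$ as a composition of two evidently unitary maps and then to check the intertwining relation by an elementary index shift. First I would use that, by construction, $\{L_i^{(n)}\}_{i\in\I,n\in\Z}$ is an ONB of $L^2(\R)$: Parseval's identity for this basis shows that the coefficient map $f\mapsto\{\hat f_i^{(n)}\}$, with $\hat f_i^{(n)}=\<f,L_i^{(n)}\>_{L^2(\R)}$ as in (\ref{fdobl2}), is a unitary isomorphism of $L^2(\R)$ onto $l^2(\I\times\Z)$.

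Next I would identify $L^2(\partial\D;l^2(\I))$ with $\bigoplus_{i\in\I}L^2(\partial\D)$ through the fixed ONB $\{u_i\}$ of $l^2(\I)$: writing $\bv=\bigoplus_i v_i u_i$, Tonelli's theorem together with Parseval in $l^2(\I)$ gives $\|\bv\|^2=\sum_i\|v_i\|^2_{L^2(\partial\D)}$. On each summand I would invoke the classical Fourier-series isometry: since $\{\om^n\}_{n\in\Z}$ is an ONB of $L^2(\partial\D)$ for the normalized measure $d\om/(2\pi)$, the map $l^2(\Z)\to L^2(\partial\D)$, $(c_n)\mapsto\sum_n\om^n c_n$, is unitary. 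Applying this fiberwise for every $i$ shows that $\{\hat f_i^{(n)}\}\mapsto\bigoplus_i[\sum_n\om^n\hat f_i^{(n)}]u_i$ is unitary from $l^2(\I\times\Z)$ onto $L^2(\partial\D;l^2(\I))$; in particular it is onto, because each $v_i\in L^2(\partial\D)$ is the Fourier series of its own (square-summable) coefficient sequence. Composing with the coefficient map of the previous paragraph yields exactly $\F$, so $\F$ is unitary.

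For the intertwining relation I would compute the coefficients of $g:=Tf$. Using that $T$ is unitary, so $T^*=T^{-1}$, and the definition $L_i^{(n)}=T^nL_i^{(0)}$ from (\ref{lt}) together with (\ref{tdo}), one gets $T^{-1}L_i^{(n)}=L_i^{(n-1)}$, whence $\hat g_i^{(n)}=\<Tf,L_i^{(n)}\>_{L^2(\R)}=\<f,L_i^{(n-1)}\>_{L^2(\R)}=\hat f_i^{(n-1)}$. Substituting into the defining formula for $\F$ and reindexing with $m=n-1$ gives, for each $i$, the $i$-th component $\sum_n\om^n\hat f_i^{(n-1)}=\om\sum_m\om^m\hat f_i^{(m)}$, i.e. $[\F Tf](\om)=\om\,[\F f](\om)$. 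This is precisely $\F T\F^{-1}=\om\,I_{l^2(\I)}$.

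The calculations are routine, and the only point requiring genuine care is the bookkeeping that joins the two Parseval identities: one must justify interchanging the sum over $i$ with the integral over $\partial\D$ (Tonelli for nonnegative summands) so that the global ONB expansion on $L^2(\R)$ and the fiberwise Fourier-series expansion on each copy of $L^2(\partial\D)$ combine into a single unitarity statement. Everything else reduces to the index shift $n\mapsto n-1$.
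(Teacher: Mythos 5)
Your proposal is correct and complete: the factorization of $\F$ through $l^2(\I\times\Z)$ (Parseval for the ONB $\{L_i^{(n)}\}$, then the fiberwise Fourier-series unitary $l^2(\Z)\to L^2(\partial\D)$ combined via Tonelli), together with the index shift $\widehat{(Tf)}_i^{(n)}=\hat f_i^{(n-1)}$, is exactly the standard argument. Note that the paper itself gives no proof here but defers to \cite[Proposition 3]{GS11a}, which proceeds along the same lines, so your write-up simply supplies the details the paper omits.
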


In the sequel we shall write
$$
\hat{f}_{i}(\om):=\sum_{n\in\Z} \om^{n}\,\hat f_{i}^{(n)}\quad \text{and}\quad
\tilde{f}_{s,j}(\om):=\sum_{m\in\Z} \om^{m}\,\tilde f_{s,j}^{(m)}\,,\quad (f\in L^2(\R);i\in\I;s=\pm,j\in\J)\,.
$$

The change of representation between both expansions (\ref{fdobl2}) and (\ref{fdobl1}) is governed by a matrix $\big(\alpha_{i,n}^{s,j,m}\big)$, where
\be\label{chm}
\alpha_{i,n}^{s,j,m}:=\< L_i^{(n)},K_{s,j}^{(m)}\>_{L^2(\R)}\,.
\ee
A useful identity shall be
\be\label{ee2}
[\widetilde{L_i^{(n)}}]_{s,j}(\om)=\sum_{m} \om^m\,[\widetilde{L_i^{(n)}}]_{s,j}^{(m)}=\sum_{m} \om^m\,\< L_i^{(n)},K_{s,j}^{(m)}\>_{L^2(\R)}=\sum_{m} \om^m\,\alpha_{i,n}^{s,j,m}\,.
\ee

Finally, let $\hat f$ denote the usual {\bf Fourier transform} of a function $f\in L^2(\R)$:
\be\label{FT}
\hat f(y):=\int_\R f(x)\,e^{-2\pi ixy}\,dx\,,\quad (y\in\R)\,.
\ee

\begin{prop}[Periodized Fourier transform]{\rm \cite[Proposition 6]{GS11a}}\label{ptft1}
Let $\big\{u_{k}\big\}_{k\in\Z}$ be a fixed ONB of $l^2(\Z)$ and $\F_*$ the operator defined by  
\be\label{tft1}
\begin{array}{rccl}
\F_*: & L^2(\R) & \longrightarrow & \ds L^2\big(\partial \D;l^2(\Z)\big)
\\[2ex]
& f & \mapsto & \ds \hat \bof_*:=\bigoplus_{k\in\Z} \,\hat f_k(\om)\,u_k\,,
\end{array}
\ee
where, if $\om=e^{2\pi i\theta}$, 
\be\label{tft1a}
\hat f_k(\om)=\hat f_k(e^{2\pi i\theta}):=\overline{\hat f(\theta+k)}\,,\quad \text{ for a.e. }\theta\in[0,1)\text{ and }k\in\Z\,.
\ee
Then $\F_*$ determines a functional spectral model for the translation operator $T$, that is, $\F_*$ is a unitary operator such that
\be\label{ditf}
[\F_* Tf](\om)=\om\cdot[\F_*f](\om)\,,\quad \text{ for a.e }\om\in\partial {\D}\text{ and }f\in L^2(\R)\,.
\ee 
\end{prop}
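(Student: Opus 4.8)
The plan is to factor $\F_*$ through the ordinary Fourier transform (\ref{FT}) and a periodization isomorphism, and then to read off both the unitarity and the spectral relation (\ref{ditf}) from that factorization. Concretely, I would realize $\F_*$ as a composition of three steps: the Fourier transform $f\mapsto\hat f$ of (\ref{FT}); pointwise complex conjugation $g\mapsto\bar g$; and the periodization $W$ that assembles $g\in L^2(\R)$ into the fibered object $\om=e^{2\pi i\theta}\mapsto\bigoplus_{k\in\Z}g(\theta+k)\,u_k$. Throughout I use the measure-preserving identification of $[0,1)$ (with Lebesgue measure $d\theta$) and $\partial\D$ (with the normalized measure $d\om/(2\pi)$) through $\theta\mapsto e^{2\pi i\theta}$, under which $\int_{\partial\D}h(\om)\,\frac{d\om}{2\pi}=\int_0^1 h(e^{2\pi i\theta})\,d\theta$. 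With this factorization, (\ref{tft1})--(\ref{tft1a}) are precisely the statement $[\F_* f]_k(\om)=\overline{\hat f(\theta+k)}$.

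First I would establish that $\F_*$ is norm-preserving. Since $\{u_k\}_{k\in\Z}$ is an ONB of $l^2(\Z)$ and $|\overline{\hat f}|=|\hat f|$,
\[
\|\F_* f\|^2_{L^2(\partial\D;l^2(\Z))}=\int_0^1\sum_{k\in\Z}|\hat f(\theta+k)|^2\,d\theta=\int_\R|\hat f(y)|^2\,dy=\|f\|^2_{L^2(\R)},
\]
where the second equality is the disjoint tiling $\R=\bigcup_{k\in\Z}[k,k+1)$ together with Tonelli's theorem, and the last is Plancherel's theorem for (\ref{FT}). For surjectivity I would invert the construction: given $\bv(\om)=\bigoplus_{k}v_k(\om)\,u_k\in L^2(\partial\D;l^2(\Z))$, set $g(\theta+k):=\overline{v_k(e^{2\pi i\theta})}$ for $\theta\in[0,1)$ and $k\in\Z$; the same identity shows $g\in L^2(\R)$, and then the $f$ determined by $\hat f=g$ (inverse Fourier transform) satisfies $\F_* f=\bv$. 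Thus $\F_*$ is norm-preserving and onto, which gives the unitarity claimed in the statement. The only nonroutine points are the measurability of the reassembled $g$ (immediate from separability of $l^2(\Z)$) and the interchange of sum and integral (legitimate by positivity).

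Finally I would verify the intertwining (\ref{ditf}). Translation becomes modulation under the Fourier transform, namely $\widehat{Tf}(y)=e^{-2\pi iy}\hat f(y)$, so on the fiber over $\om=e^{2\pi i\theta}$,
\[
[\F_* Tf]_k(\om)=\overline{\widehat{Tf}(\theta+k)}=\overline{e^{-2\pi i(\theta+k)}\hat f(\theta+k)}=e^{2\pi i(\theta+k)}\,\overline{\hat f(\theta+k)}=\om\cdot[\F_* f]_k(\om),
\]
using $e^{2\pi ik}=1$ for $k\in\Z$ in the last step. I expect the genuinely delicate point to be bookkeeping rather than depth: the conjugation built into (\ref{tft1a}) leaves all norms unchanged but is exactly what turns $e^{-2\pi i\theta}=\overline\om$ into $e^{2\pi i\theta}=\om$, so that $T$ is carried to multiplication by $\om$ and not by $\om^{-1}$. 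Keeping this conjugation consistent across the norm computation, the construction of preimages, and the intertwining identity is the main thing to get right.
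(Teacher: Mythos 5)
Your argument is correct: the norm computation via the tiling $\R=\bigcup_{k\in\Z}[k,k+1)$ plus Plancherel, the explicit inverse giving surjectivity, and the intertwining computation (where the built-in conjugation turns $\overline{e^{-2\pi i(\theta+k)}}$ into $\om$) are all sound. Note that the paper itself offers no proof of this proposition, deferring entirely to \cite[Proposition 6]{GS11a}; your factorization through the Fourier transform, conjugation, and periodization is the standard route and supplies a self-contained justification consistent with how the statement is used later (e.g.\ in Remark \ref{rm39} and Proposition \ref{psis}).
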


\section{Spectral techniques for wavelet frames}\label{sect3}

Although the results of this section can be given in the more general setting of affine systems in $L^2(\R^d)$, we restrict attention on the Hilbert space $L^2(\R)$ and {wavelet systems} $X$ of the form (\ref{ws}).
Theorems \ref{th1} and \ref{th2} and corollary \ref{coro10} below work on the dilation representation of proposition \ref{pdft} and not on the usual Fourier domain of fiberization techniques (see, e.g., \cite[th.3.3.5]{RS95} and \cite[th.3.1]{RS97a}). 
Some comments comparing spectral with fiberization techniques are included in remark \ref{rm7}.
First, we include a technical result necessary to prove theorem \ref{th1}:

\begin{lemma}\label{lth1}
Let $X$ be a wavelet system in $L^2(\R)$ of the form (\ref{ws}) and such that 
\be\label{aePsi}
\sup_{\psi\in\Psi} ||\psi||_{L^2(\R)}=M<\infty\,.
\ee 
Then the corresponding operator $T_X$, given by (\ref{c5.3}), is a well-defined bounded operator from $l^2(X)$ into $L^2(\R)$ if and only if the corresponding operator $S$, given by (\ref{c5.5}), is a well-defined bounded operator on $L^2(\R)$.
\end{lemma}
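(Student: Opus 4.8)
The plan is to route both conditions through the \emph{Bessel property} of $X$ --- the upper inequality in (\ref{c5.2}), namely $\sum_{x\in X}|\<f,x\>_{L^2(\R)}|^2\le B\,||f||^2_{L^2(\R)}$ for all $f\in L^2(\R)$ --- using as a bridge the standard fact (see \cite[Chapter 5]{Chr}) that $T_X$ is a well-defined bounded operator from $l^2(X)$ into $L^2(\R)$ if and only if $X$ is a Bessel sequence, in which case $T_X^*$ is the bounded analysis operator $f\mapsto\{\<f,x\>_{L^2(\R)}\}_{x\in X}$ and $T_X=(T_X^*)^*$. I would first record that, since $T$ and $D$ are unitary, every element of $X$ obeys $||\psi_{k,j}||_{L^2(\R)}=||\psi||_{L^2(\R)}\le M$, so hypothesis (\ref{aePsi}) makes $X$ norm-bounded and keeps every term of the series below under control.

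For the ``only if'' implication I would argue directly and briefly: if $T_X$ is well-defined and bounded, then $T_X^*$ is bounded, $T_X^*f=\{\<f,x\>_{L^2(\R)}\}_{x\in X}\in l^2(X)$ for every $f$, and hence $Sf=T_X(T_X^*f)=\sum_{x\in X}\<f,x\>_{L^2(\R)}\,x$ converges and defines the bounded operator $S=T_XT_X^*$.

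The substantive implication is ``if''. Assuming $S$ well-defined and bounded, the task is to extract a Bessel bound from $S$ alone, without presupposing that $f\mapsto\{\<f,x\>_{L^2(\R)}\}_{x\in X}$ already lands in $l^2(X)$ --- which is exactly the conclusion sought, so circularity must be avoided. I would fix $f\in L^2(\R)$, enumerate $X=\{x_1,x_2,\dots\}$ along an order realizing the convergence of the series defining $Sf$, and set $S_Nf:=\sum_{k=1}^N\<f,x_k\>_{L^2(\R)}\,x_k$, so that $S_Nf\to Sf$ in $L^2(\R)$. Continuity of the inner product together with the non-negativity of the summands then yields, for every $N$,
$$
\sum_{k=1}^N|\<f,x_k\>_{L^2(\R)}|^2=\<S_Nf,f\>_{L^2(\R)}\le\<Sf,f\>_{L^2(\R)}\le ||S||\,||f||^2_{L^2(\R)}\,.
$$
Since the partial sums are non-decreasing and uniformly bounded, the series $\sum_{x\in X}|\<f,x\>_{L^2(\R)}|^2$ converges (independently of the enumeration) and is dominated by $||S||\,||f||^2_{L^2(\R)}$. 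This is precisely the Bessel inequality with $B=||S||$, so by the quoted bridge $T_X$ is a well-defined bounded operator.

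The step I expect to be the main obstacle is the passage, in the last paragraph, from ``$S$ is a convergent and bounded expression'' to the quadratic-form identity $\<Sf,f\>_{L^2(\R)}=\sum_{x\in X}|\<f,x\>_{L^2(\R)}|^2$ carried out \emph{before} knowing that $X$ is Bessel. The monotone partial-sum argument is what defuses this circularity and, thanks to the non-negativity of the summands, is insensitive to the mode of convergence of the series defining $S$; the uniform norm bound (\ref{aePsi}) is what guarantees the individual terms stay controlled throughout.
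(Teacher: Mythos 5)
Your proof is correct, but it follows a genuinely different route from the paper's. You reduce everything to the Bessel inequality: assuming the series in (\ref{c5.5}) converges for every $f$ and defines a bounded $S$, you observe that the partial sums $\<S_Nf,f\>=\sum_{k=1}^N|\<f,x_k\>|^2$ are nonnegative, non\-decreasing and converge to $\<Sf,f\>\le \|S\|\,\|f\|^2$, so $X$ is Bessel with bound $\|S\|$ and the standard equivalence (\cite[Th.~3.2.3]{Chr}, which the paper itself invokes later, in the proof of Theorem~\ref{th1}) gives boundedness of $T_X$. The monotone partial-sum step is sound and does defuse the circularity you worry about. The paper instead runs an unbounded-operator argument: it views $T_X$ as densely defined on $l^{00}(X)$, shows that hypothesis (\ref{aePsi}) is precisely what makes $T_X$ preclosed, and then uses the fact that ${\mc D}(\overline{T_X}T_X^*)$ is a core for $T_X^*$ to convert boundedness of $S=\overline{T_X}T_X^*$ on that domain into boundedness of $T_X^*$, hence of $T_X$. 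The two approaches buy different things. Yours is shorter and more elementary, but it rests on the strong literal reading of ``well-defined'' in (\ref{c5.5}) --- that the series $\sum_{x}\<f,x\>x$ converges for \emph{every} $f\in L^2(\R)$ --- and, as you implicitly concede, it never actually uses (\ref{aePsi}); that is harmless (norm-boundedness of $X$ follows from the Bessel bound anyway) but it leaves the hypothesis unexplained. The paper's argument works under the a priori weaker assumption that $\overline{T_X}T_X^*$ is bounded merely on its natural dense domain, and it is exactly the preclosedness step that shows why (\ref{aePsi}) is in the statement: without it $T_X^*$ need not be densely defined and $S$ would not even be meaningful as an operator product. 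If you keep your route, you should state explicitly which reading of ``well-defined'' you are using; under that reading your proof is complete.
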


\begin{proof}
If $T_X$ is a well-defined bounded operator from $l^2(X)$ into $L^2(\R)$, then  the adjoint $T_X^*$ and $S=T_XT_X^*$ are also well-defined bounded operators. 
For the opposite implication, it is obvious that the synthesis operator $T_X$, given by (\ref{c5.3}), is well-defined at least on the dense subspace $l^{00}(X)$ of $l^2(X)$ formed by the sequences $\{c_x\}\in l^2(X)$ with a finite number of non-zero components. In principle, consider  $T_X$ defined on $l^{00}(X)$. Such operator $T_X$ is {\it preclosed} if and only if (\ref{aePsi}) is satisfied.
To see this, recall that $T_X$ is preclosed if and only if for every sequence $\{c_n\}\subset l^{00}(X)$ such that $\lim_{n\to\infty} c_n=0$ one has $\lim_{n\to\infty} T_X c_n=0$ (see, for example, \cite[page 155]{KRI97}).
If (\ref{aePsi}) is satisfied and $\lim_{n\to\infty} c_n=0$, then 
$\lim_{n\to\infty} ||T_X c_n||_{L^2(X)}\leq M\,\lim_{n\to\infty} ||c_n||_{l^2(X)}=0$. Conversely, if (\ref{aePsi}) is not satisfied, consider a sequence $\{\psi_n\}\subset\Psi$ such that $\lim_{n\to\infty} ||\psi_n||_{L^2(\R)}=\infty$ and a sequence $\{c_n\}\subset l^{00}(X)$ such that the only non-zero element of $c_n$ is the $n$-component with modulus equal to $||\psi_n||^{-1}_{L^2(\R)}$ for $n\geq n_0$.
Moreover, if $T_X$ is preclosed with closure $\overline{T_X}$, the adjoint $T_X^*$ is defined in a dense domain ${\mc D}(T_X^*)$ of $L^2(\R)$ and $T_X^*$ is a closed operator, $T_X^*=[\overline{T_X}]^*$, $T_X^{**}=\overline{T_X}$ and the domain of $\overline{T_X}T_X^*$, ${\mc D}(\overline{T_X}T_X^*)$, is a {\it core} for $T_X^*$; see remark 2.7.7 and theorem 2.7.8 in \cite{KRI97}. 
Assume that $\overline{T_X}T_X^*$ is bounded on ${\mc D}(\overline{T_X}T_X^*)$ and, then, $\overline{T_X}T_X^*$ admits a well-defined bounded extension $S$ on $L^2(\R)$. Since, for $h\in {\mc D}(\overline{T_X}T_X^*)$,
$$
|\<Sh,h\>_{L^2(\R)}|=|\<\overline{T_X}T_X^*h,h\>_{L^2(\R)}|=\<T_X^*h,T_X^*h\>_{L^2(\R)}=||T_X^*h||_{l^2(X)}^2\leq ||S||\,||h||_{L^2(\R)}^2\,,
$$
$T_X^*$ is also bounded on ${\mc D}(\overline{T_X}T_X^*)$, so that $T_X^*$ admits a well-defined bounded extension on $L^2(\R)$ whose (well-defined bounded) adjoint extends $T_X$ and $\overline{T_X}$. Thus, $T_X$, defined in principle on $l^{00}(X)$, admits a well-defined bounded extension on $l^2(X)$.
\end{proof}

Recall that a countable subset $X$ of a Hilbert space $\H$ is called a {\bf Bessel system} if the second inequality in (\ref{c5.2}) is satisfied for some constant $B>0$. In such case, every number $B$ satisfying (\ref{c5.2}) is called a {\it Bessel bound} for $X$.

\begin{theor}\label{th1}
Let $X$ be a wavelet system in $L^2(\R)$ of the form (\ref{ws}) and such that (\ref{aePsi}) is satisfied. Then:
\begin{enumerate}
\item
$X$ is a Bessel system if and only the corresponding operator $S$, given by (\ref{c5.5}), is a well-defined bounded operator on $L^2(\R)$. In such case, $S$  commutes with $D$ and, then, going to the dilation representation given in proposition \ref{pdft}, $\G S\G^{-1}$ is a decomposable operator on $L^2(\partial {\D}, l^2(\J)\oplus l^2(\J))$:
$$
\G S\G^{-1}=S(\om)\,.
$$
Moreover, $S$ is positive, $S(\om)$ is positive for a.e. $\om\in \partial {\D}$ and 
$$
||S||=\esup_{\om\in\partial\D} ||S(\om)||=\esup_{\om\in\partial\D} \sup_{||u||_{l^2(\J)\oplus l^2(\J)}=1}||S(\om)u||_{l^2(\J)\oplus l^2(\J)}<\infty\,.
$$

\item
$X$ is a {frame} for $L^2(\R)$ if and only if $S$ is a well-defined bounded operator on $L^2(\R)$ with bounded two-sided inverse $S^{-1}$. In such case, $S^{-1}$ also commutes with $D$ and
$$
\G S^{-1}\G^{-1}=S(\om)^{-1}\,.
$$
Equivalently, $X$ is a {frame} for $L^2(\R)$ if and only if
$$
\alpha:=\esup_{\om\in\partial\D} \sup_{||u||_{l^2(\J)\oplus l^2(\J)}=1}||S(\om)u||_{l^2(\J)\oplus l^2(\J)}<\infty
$$
and
$$
\beta:=\einf_{\om\in\partial\D}\inf_{||u||_{l^2(\J)\oplus l^2(\J)}=1} ||S(\om)u||_{l^2(\J)\oplus l^2(\J)}>0\,;
$$
in such case, $||S||=\alpha$ and $||S^{-1}||=\beta^{-1}$.

\item
$X$ is a {tight frame} for $L^2(\R)$, with frame bound $B$, if and only if $S=B\,I_{L^2(\R)}$ or, equivalently,
$$
\G S\G^{-1}=B\,I_{l^2(\J)\oplus l^2(\J)}\,.
$$
\end{enumerate}
\end{theor}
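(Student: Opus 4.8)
The plan is to exploit the dilation invariance of the wavelet system. The single structural fact driving all three parts is that the frame operator $S$ commutes with the dilation operator $D$; once this is established, Proposition \ref{prop1vn} turns $\G S\G^{-1}$ into a decomposable operator on $L^2(\partial\D;l^2(\J)\oplus l^2(\J))$, and the three characterizations become fiberwise statements about the operator-valued function $\om\mapsto S(\om)$, read off from standard direct-integral theory.

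For part 1, I would first record that $X$ is Bessel if and only if the analysis operator $T_X^*$ is bounded, equivalently $T_X$ is bounded, which by Lemma \ref{lth1} (using hypothesis (\ref{aePsi})) is equivalent to $S$ being a well-defined bounded operator; the upper frame bound then reads $\sum_{x\in X}|\<f,x\>|^2=||T_X^*f||^2\leq||S||\,||f||^2$. To get commutation, note that $D\psi_{k,j}=\psi_{k+1,j}$, so $DX=X$ as sets; since $D$ is unitary, a reindexing of the series (\ref{c5.5}) gives $DSD^{-1}f=\sum_{x\in X}\<f,Dx\>Dx=Sf$, i.e. $SD=DS$. By Proposition \ref{pdft} we have $\G D\G^{-1}=\om I$, so $\G S\G^{-1}$ commutes with $\om I$; Proposition \ref{prop1vn} then yields that $\G S\G^{-1}$ is decomposable, $\G S\G^{-1}=S(\om)$. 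Positivity of $S$ is immediate from $S=T_XT_X^*$, and the standard correspondence between a decomposable operator and its fibers gives $S(\om)\geq 0$ a.e. and $||S||=\esup_{\om}||S(\om)||$, with $||S(\om)||=\sup_{||u||=1}||S(\om)u||$ by definition of the operator norm.

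For part 2, I would use the standard fact that a Bessel system is a frame precisely when $S$ is boundedly invertible: the two-sided frame inequality $A\,||f||^2\leq\<Sf,f\>\leq B\,||f||^2$ says exactly that the positive operator $S$ has spectrum in $[A,B]\subset(0,\infty)$. From $SD=DS$ one gets $S^{-1}D=DS^{-1}$, so $\G S^{-1}\G^{-1}$ is likewise decomposable and equals $S(\om)^{-1}$. Translating invertibility to the fibers: since each $S(\om)\geq 0$, one has $\inf_{||u||=1}||S(\om)u||=\inf\sigma(S(\om))$, and the direct-integral calculus gives $\inf\sigma(S)=\einf_\om\inf\sigma(S(\om))=\beta$; hence $S^{-1}$ is bounded if and only if $\beta>0$, with $||S^{-1}||=\beta^{-1}$, while $||S||=\alpha$ as in part 1. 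Part 3 is then a degenerate case: $X$ is tight with bound $B$ means $A=B$, i.e. $\<Sf,f\>=B\,||f||^2$ for all $f$, equivalently $S=B\,I$; decomposability forces this fiberwise, $S(\om)=B\,I$ a.e.

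The routine parts are the frame-theoretic reductions (all standard, e.g. \cite[Chapter 5]{Chr}) and the commutation computation. The step I expect to require the most care is the fiber analysis: rigorously matching boundedness, positivity, the operator norm and the bottom of the spectrum of $S$ with the corresponding essential sup/inf of $||S(\om)||$ and $\inf\sigma(S(\om))$. These are consequences of the direct-integral structure \cite[Chapter 14]{KRII97}, but one must check the measurability of $\om\mapsto||S(\om)||$ and $\om\mapsto\inf\sigma(S(\om))$ and that the essential extrema carry the right operator-theoretic meaning, and in part 2 confirm that uniform fiber invertibility (equivalently $\beta>0$) is exactly what makes the decomposable inverse bounded.
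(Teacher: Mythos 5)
Your proposal is correct and follows essentially the same strategy as the paper: reduce Besselness to boundedness of $S$ via Lemma \ref{lth1}, establish $SD=DS$ by reindexing the unconditionally convergent series, invoke Proposition \ref{prop1vn} to get decomposability of $\G S\G^{-1}$, and read off the frame/tight-frame conditions fiberwise from the direct-integral calculus of \cite[Chapter 14]{KRII97}. The only genuine divergences are local and both in your favour as regards economy: for part 3 you pass from $\<Sf,f\>=B\,\|f\|^2$ directly to $S=B\,I$ by polarization of the self-adjoint operator $S$, whereas the paper runs a longer argument through pseudo-inverses and the observation that $T_X/\|T_X\|$ is a partial isometry with final space $L^2(\R)$; and you obtain decomposability of $S^{-1}$ from the commutation $S^{-1}D=DS^{-1}$ and Proposition \ref{prop1vn}, whereas the paper expands $S^{-1}$ as a norm-convergent Neumann series inside the von Neumann algebra of decomposable operators. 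Both routes are valid with the tools the paper has on hand; the paper's Neumann-series argument has the small side benefit of exhibiting $\G S^{-1}\G^{-1}$ explicitly as a limit of polynomials in $S(\om)$, but your version is shorter. You correctly flag the fiber measurability and $\esup/\einf$ identifications as the step needing care; these are exactly the points the paper delegates to \cite[props.\ 14.1.8--9]{KRII97}.
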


\begin{proof}
1. It is well known \cite[th.3.2.3]{Chr} that a countable subset $X$ of a Hilbert space $\H$ is a Bessel system with Bessel bound $B$ if and only if the corresponding synthesis operator $T_X$, given by (\ref{c5.3}), is a well-defined bounded operator and $||T_X||\leq B^{1/2}$. 
By lemma \ref{lth1}, $T_X$ is a well-defined bounded operator if and only if the corresponding operator $S$, given by (\ref{c5.5}), is a well-defined bounded operator on $L^2(\R)$.
In such case, $||T_X^*||=||T_X||$, and $S$ is positive since $\<Sf,f\>_{L^2(\R)}=\<T_X^*f,T_X^*f\>_{l^2(X)}\geq 0$.
Moreover, $S$ commutes with the dilation operator $D$:
$$
SD f=\sum_{\stackrel{k,j\in\Z}{\psi\in\Psi}}\<Df,D^{k}T^j\psi\>D^kT^j\psi=
\sum_{\stackrel{k,j\in\Z}{\psi\in\Psi}}\< f,D^{k-1}T^j\psi\>D^kT^j\psi=DSf\,,\quad (f\in L^2(\R))\,.
$$
(We use that $D$ is unitary, $D^*=D^{-1}$, and that the series defining $S$ converges unconditionally for all $f\in L^2(\R)$; see \cite[Corollary 3.2.5]{Chr} and lemma \ref{lcu} below.)
By proposition \ref{prop1vn}, this implies that $\G S\G^{-1}$ is a decomposable operator on $L^2(\partial {\D}, l^2(\J)\oplus l^2(\J))$.
Since $S$ is positive, $S(\om)$ is positive a.e. (see \cite[prop.14.1.8-9]{KRII97}).
Being $\G$ unitary, $||S||=||\G S\G^{-1}||$ and, by \cite[Prop.14.1.9]{KRII97}, 
$$
||S||=||\G S\G^{-1}||=\esup_{\om\in\partial\D} ||S(\om)||\,.
$$

2. In terms of $S$, the inequalities in (\ref{c5.2}) read
$$
A\,||f||_{L^2(\R)}^2\leq\<Sf,f\>\leq B\,||f||_{L^2(\R)}^2\,,\quad (f\in L^2(\R))\,,  
$$
i.e., $AI_{L^2(\R)}\leq S\leq BI_{L^2(\R)}$.
In particular, the first inequality implies that $A\,||f||_{L^2(\R)}\leq ||Sf||_{L^2(\R)}$, for every $f\in L^2(\R)$. Since $S$ is positive, this fact is equivalent to the existence of the two-sided bounded inverse $S^{-1}$ of $S$ (see \cite[th.12.12.c]{Raf73}). That $S^{-1}$ exists implies that $\text{Range}(S)=\text{Range}(T_X)=L^2(\R)$. And a Bessel system $X$ is a frame if and only if this last condition is satisfied (see \cite[th.5.5.1]{Chr}).
Furthermore, $AI_{L^2(\R)}\leq S\leq BI_{L^2(\R)}$ implies that $0\leq I_{L^2(\R)}-B^{-1}S\leq \frac{B-A}{B}I_{L^2(\R)}$ and, consequently,
$$
||I_{L^2(\R)}-B^{-1}S||=\sup_{||f||=1}\big|\<(I_{L^2(\R)}-B^{-1}S)f,f\>\big|\leq \frac{B-A}{B}<1,
$$
so that
$$
S^{-1}=B^{-1}\sum_{k=0}^\infty (I_{L^2(\R)}-B^{-1}S)^k\,,
$$
where the last series converges in norm (uniformly). Since the set of decomposable operators is a $C^*$-algebra (moreover, a von Neumann algebra, see proposition \ref{prop1vn}), $S^{-1}$ is also a decomposable operator in the dilation representation given in proposition \ref{pdft} and (see \cite[prop.14.1.8]{KRII97})
$$
\G S^{-1}\G^{-1}=S(\om)^{-1}\,.
$$ 
(Note that $S(\om)$ and $S(\om)^{-1}$ are defined for a.e. $\om\in\partial\D$.)
Now, recall that for a bounded normal operator $S$ on a Hilbert space $\H$, $S$ has a bounded two-sided inverse if and only if $0<\beta:=\inf\{||Sx||_\H:x\in\H,\, ||x||_\H =1\}$ and, then, $||S^{-1}||=\beta^{-1}$; see \cite[lemma 2.4.8]{KRI97} and \cite[th.12.12.c]{Raf73}. Thus, being $S$, $\G S\G^{-1}$ and $S(\om)$ positive, the bounded two-sided inverse $S^{-1}$ exists if and only if 
\begin{eqnarray*}
\infty &>& \ds\frac{1}{\ds\einf_{\om\in\partial\D}\inf_{||u||_{l^2(\J)\oplus l^2(\J)}=1} ||S(\om)u||_{l^2(\J)\oplus l^2(\J)}}=
\esup_{\om\in\partial\D} \frac{1}{\ds\inf_{||u||_{l^2(\J)\oplus l^2(\J)}=1} ||S(\om)u||_{l^2(\J)\oplus l^2(\J)}}=
\\[2ex]
&=&\ds\esup_{\om\in\partial\D} ||S(\om)^{-1}||=||(\G S\G^{-1})^{-1}||=
\frac{1}{\ds\inf_{||\bu||_{L^2(\partial\D,l^2(\J)\oplus l^2(\J))}=1} ||(\G S\G^{-1})\bu||_{L^2(\partial\D,l^2(\J)\oplus l^2(\J))}}\,;
\end{eqnarray*}
see \cite[prop.14.1.8--9]{KRII97} for details.

3. If $X$ is a frame, in terms of {\it pseudo-inverses}\footnote{
Let $\H$, $\H'$ be Hilbert spaces and suppose that $U:\H\to\H'$ is a bounded operator with closed range $\RR_U$ and kernel $\NN_U$. The {\it pseudo-inverse} of $U$ is the unique operator $U|^{-1}:\H'\to\H$  satisfying
$\NN_{U|^{-1}}= \RR_U^\perp$, $\RR_{U|^{-1}}= \NN_U^\perp$ and
$UU|^{-1} f=f$ for $f\in\RR_U$ and $U|^{-1}U f=f$ for $f\in\NN_U^\perp$, where $\perp$ denotes orthogonal complement.
}, 
$$
S^{-1}=T_X^*|^{-1}T_X|^{-1}=(T_X|^{-1})^*T_X|^{-1}\,,
$$
$||T_X^*|^{-1}||=||T_X|^{-1}||$ and the frame bounds are $B=||T_X||^2=||T_X^*||^2$ and
$A=1/||T_X|^{-1}||^2=1/||T_X^*|^{-1}||^2$; see \cite[lemmas 5.5.4 and A.7.2]{Chr} for details. 
Thus, $X$ is a tight frame if and only if  $X$ is a frame and $||T_X||\cdot||T_X|^{-1}||=1$. In such case, for $f\in\NN_{T_X}^\perp$, 
$||f||=||T_X|^{-1}T_Xf||\leq ||T_X|^{-1}||\,||T_Xf||$, so that
$$
||T_X||\,||f||=\frac{1}{||T_X|^{-1}||}||f||\leq \,||T_Xf||\leq ||T_X||\,||f||
$$
and, then, $||T_X||\,||f||=||T_Xf||$. That is,  $T_X/||T_X||$ is a partial isometry with initial space $\NN_{T_X}^\perp$ and final space $L^2(\R)$. The orthogonal projection over the final space is
$$
I_{L^2(\R)}=\frac{T_XT_X^*}{||T_X||^2}=\frac{S}{||S||}=B^{-1} S\,.
$$
The converse follows from the fact that (\ref{c5.2}) is equivalent to $AI\leq S\leq BI$. Finally, that $\G U\G^{-1}$ is the constant diagonalizable operator $I_{l^2(\J)\oplus l^2(\J)}$ if and only if $U=I_{L^2(\R)}$ is just \cite[prop.14.1.8.iv]{KRII97}.
\end{proof}

\begin{remark}\label{rm7}\rm
In theorem \ref{th1}, the frame operator $S=T_XT_X^*$ has a decomposable image $\G S\G^{-1}$ on the dilation representation given in proposition \ref{pdft} thanks to the fact that $S$ and $D$ commute (see proposition \ref{prop1vn}). 
What about the commutation relations between $S$ and the translation (shift) operator $T$ in order that $S$ have decomposable images $\F S\F^{-1}$ and $\F_* S\F_*^{-1}$ on the translation representations given in propositions \ref{ptft} and \ref{ptft1}?
This question is the cornerstone to develop {\it fiberization techniques} for wavelet systems in $L^2(\R)$ of the form (\ref{ws}) on the Fourier domain. 

For the translation and dilation operators, $T$ and $D$,  defined on $L^2(\R)$ by (\ref{tdo}), one has $TD=DT^2$. Taking adjoints, $D^{-1}T^{-1}=T^{-2}D^{-1}$.
Also, $D=T^{-1}DT^2$ or $DT^{-2}=T^{-1}D$, and $D^{-1}=T^{-2}D^{-1}T$ or $T^2D^{-1}=D^{-1}T$. Thus, in general,
$$
\begin{array}{ll}
T^jD^k=D^kT^{j2^k}\,, & \text{ if } k>0 \text{ and } j\in\Z\,,
\\[1ex]
T^{j2^{|k|}}D^k=D^kT^{j}\,, & \text{ if } k<0 \text{ and } j\in\Z\,.
\end{array}
$$

Given a Bessel wavelet system $X$ in $L^2(\R)$ of the form (\ref{ws}) and the corresponding operator $S=T_XT_X^*$, for each $f\in L^2(\R)$,
$$
\begin{array}{rl}
\ds ST f&\ds =\sum_{\stackrel{k,j\in\Z}{\psi\in\Psi}}\< f,T^{-1}D^kT^j\psi\>D^kT^j\psi=
\\[4ex]
&\ds = \sum_{k\geq0}\sum_{\stackrel{j\in\Z}{\psi\in\Psi}}\< f,T^{-1}D^kT^{j}\psi\>D^kT^j\psi+ \sum_{k<0}\sum_{\stackrel{j\in\Z}{\psi\in\Psi}}\< f,T^{-1}D^kT^j\psi\>D^kT^j\psi=
\\[4ex]
&\ds =\sum_{k\geq0}\sum_{\stackrel{j\in\Z}{\psi\in\Psi}}\< f,D^kT^{j-2^k}\psi\>D^kT^{j}\psi+ \sum_{k<0}\sum_{\stackrel{j\in\Z}{\psi\in\Psi}}\< f,T^{j2^{|k|}-1}D^k\psi\>T^{j2^{|k|}}D^k\psi
\end{array}
$$
and 
$$
\begin{array}{rl}
\ds TS f&\ds =\sum_{\stackrel{k,j\in\Z}{\psi\in\Psi}}\< f,D^kT^j\psi\>TD^kT^j\psi=
\\[4ex]
&\ds = \sum_{k\geq0}\sum_{\stackrel{j\in\Z}{\psi\in\Psi}}\< f,D^kT^j\psi\>TD^kT^j\psi+ \sum_{k<0}\sum_{\stackrel{j\in\Z}{\psi\in\Psi}}\< f,D^kT^j\psi\>TD^kT^j\psi=
\\[4ex]
&\ds=\sum_{k\geq0}\sum_{\stackrel{j\in\Z}{\psi\in\Psi}}\< f,D^kT^j\psi\>D^kT^{j+2^k}\psi+ \sum_{k<0}\sum_{\stackrel{j\in\Z}{\psi\in\Psi}}\< f,T^{j2^{|k|}}D^k\psi\>T^{1+j2^{|k|}}D^k\psi\,.
\end{array}
$$
The sums for $k\geq 0$ coincide, but not the sums for $k<0$. If for each $k<0$ we add to the affine system $X$ the functions
$$
\psi_{k,j}^{l}=T^{2^{|k|}j+l}D^k\psi=T^lD^kT^j\psi=2^{k/2}\psi(2^k(\cdot-l)-j),\quad l=1,2,\ldots,2^{-k}-1,
$$
one obtains a system $\tilde X^q$ associated with $X$ such that the corresponding frame operator $S=T_{\tilde X^q}T_{\tilde X^q}^*$ commutes with the translation operator $T$. Thus, such $S$ shall be a decomposable operator on any spectral representation of $T$. Moreover, we have $\tilde X^q = \tilde X^q_+ \cup \tilde X^q_-$, where
$$%\begin{equation}\label{qast}
\begin{array}{l}
\tilde X^q_+ = \big\{\psi_{k,j}:=D^k T^j\psi: \psi\in\Psi,\,k\geq0,\,j\in\Z\big\} = \big\{T^a D^k T^b\psi: \psi\in\Psi,\,k\geq0,\,a\in\Z,\,0\leq b <2^k\big\}\,,\\[2ex]
\tilde X^q_- = \big\{\psi_{k,j}^{l}:=T^l D^k T^j\psi: \psi\in\Psi,\,k<0,\,j\in\Z,\,0\leq l<2^{-k}\big\} = \big\{T^a D^k\psi: \psi\in\Psi,\,k<0,\,a\in\Z\big\}\,.
\end{array}
$$%\end{equation}
A variant of $\tilde X^q$ is what Ron and Shen \cite[Section 5]{RS97a} call the {\it quasi-affine system} $X^q$ associated with $X$. $X^q = X^q_+ \cup X^q_-$, where $X^q_+=\tilde X^q_+$, the truncated affine system $X_0$ according to Ron and Shen \cite[Section 4]{RS97a}, and
$$%\begin{equation}\label{qas}
\begin{array}{rl}
X^q_- &= \big\{2^{k/2}\psi_{k,j}^{l}:=2^{k/2}T^l D^k T^j\psi: \psi\in\Psi,\,k<0,\,j\in\Z,\,0\leq l<2^{-k}\big\} = 
\\[1ex]
&= \big\{2^{k/2}T^a D^k\psi: \psi\in\Psi,\,k<0,\,a\in\Z\big\}\,.
\end{array}
$$%\end{equation}
Working in the Fourier domain, one is forced to consider the translation invariant system $X^q$ or $\tilde X^q$.  Ron and Shen \cite[Theorem 5.5]{RS97a} prove a variant of the following result: The wavelet system $X$ is a frame if and only if its quasi-affine counterparts $X^q$ or $\tilde X^q$ are a frame.  In particular, the frame $X$ is tight if and only if the quasi-affine system $X^q$ or $\tilde X^q$ is tight. Furthermore, the two systems $X$ and $X^q$ have identical frame bounds.
The choice of the dilation representation of proposition \ref{pdft} (or any other spectral representation for $D$) avoids this inconvenience, since a wavelet system (in general, any affine system) is dilation invariant.
\end{remark}

For a Bessel wavelet system $X$ in $L^2(\R)$ of the form (\ref{ws}), the operator $S=T_XT_X^*$ in the {dilation representation} of proposition \ref{pdft}, $\G S\G^{-1}$, is given by
$$
L^2\big(\partial {\D};l^2(\J)\oplus l^2(\J)\big)
\stackrel{\G^{-1}}{\longrightarrow} L^2(\R)\stackrel{S}{\longrightarrow}L^2(\R)\stackrel{\G}{\longrightarrow}L^2\big(\partial {\D};l^2(\J)\oplus l^2(\J)\big)
$$
$$
\G f=\tilde{\mathbf f}=\big\{\tilde f_{s,j}^{(m)}\big\}\mapsto f\mapsto Sf={\sum_{\stackrel{k,j\in\Z}{\psi\in\Psi}}}^u\< f,\psi_{k,j}\>_{L^2(\R)}\,\psi_{k,j}\mapsto {\sum_{\stackrel{k,j\in\Z}{\psi\in\Psi}}}^u\< f,\psi_{k,j}\>_{L^2(\R)}\,\G\psi_{k,j}\,.
$$
The superindex 'u' added to the sum symbol $\sum$ in the last expressions reflects that the series defining $S$ converges unconditionally for all $f\in L^2(\R)$; see \cite[Corollary 3.2.5]{Chr}. 

\begin{lemma}\label{lcu}
{\rm \cite[Lemma 2.1.1]{Chr}}
 Let $\{y_k\}_{k=1}^\infty$ be a sequence in a Banach space Y, and let
$y\in Y$. Then the following are equivalent:

(i) $ \sum_{k=1}^\infty y_k$ converges unconditionally to $y$ in $Y$.

(ii) For every $\epsilon>0$ there exists a finite set $F$ such that
$||y-\sum_{k\in I} y_k||\leq\epsilon$ for all finite sets $I\subset\N$ containing $F$.
\end{lemma}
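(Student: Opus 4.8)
The plan is to read condition (ii) as the statement that the net of finite partial sums $s_I:=\sum_{k\in I} y_k$, indexed by the finite subsets $I\subset\N$ directed by inclusion, converges to $y$, while condition (i) is the statement that every rearrangement $\sum_k y_{\sigma(k)}$ (for $\sigma$ a permutation of $\N$) converges to $y$. The equivalence of these two notions of summability is the content of the lemma.

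First I would prove the routine implication (ii)$\,\Rightarrow\,$(i). Fix a permutation $\sigma$ and $\epsilon>0$, and take the finite set $F$ supplied by (ii). Since $\sigma$ is a bijection there is an $N$ with $F\subset\{\sigma(1),\dots,\sigma(N)\}$; then for every $n\geq N$ the finite set $I_n:=\{\sigma(1),\dots,\sigma(n)\}$ contains $F$, so $\|y-\sum_{k=1}^n y_{\sigma(k)}\|=\|y-s_{I_n}\|\leq\epsilon$. Hence the $\sigma$-rearranged series converges to $y$, and since $\sigma$ was arbitrary, (i) follows.

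The substantive direction is (i)$\,\Rightarrow\,$(ii), which I would prove by contraposition. Suppose (ii) fails: there is $\epsilon_0>0$ such that for every finite $F$ there exists a finite $I\supseteq F$ with $\|y-s_I\|>\epsilon_0$. I would then build inductively a nested chain of finite sets $I_1\subsetneq I_2\subsetneq\cdots$ as follows: having produced $I_n$ with $\|y-s_{I_n}\|>\epsilon_0$, apply the failure of (ii) to the finite set $F_{n+1}:=I_n\cup\{1+\max I_n\}$ to obtain a finite $I_{n+1}\supseteq F_{n+1}$ with $\|y-s_{I_{n+1}}\|>\epsilon_0$. The enlargement by $1+\max I_n$ guarantees $\bigcup_n I_n=\N$, and by construction the chain is nested. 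Enumerating $\N$ block by block --- first the elements of $I_1$, then those of $I_2\setminus I_1$, then those of $I_3\setminus I_2$, and so on --- produces a permutation $\sigma$ whose partial sums, taken at the ends of the successive blocks, coincide with the $s_{I_n}$ and therefore satisfy $\|y-s_{I_n}\|>\epsilon_0$ for all $n$. Consequently $\sum_k y_{\sigma(k)}$ does not converge to $y$, contradicting (i).

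The main obstacle is precisely this block construction in the (i)$\,\Rightarrow\,$(ii) direction: one must convert the single quantity $\epsilon_0$ witnessing the failure of net convergence into a genuine rearrangement that violates (i). The two points requiring care are ensuring the chain exhausts $\N$ (so that the block enumeration is a true permutation, not merely an injection of a proper subset) and arranging the enumeration so that the $s_{I_n}$ appear as honest partial sums of the rearranged series; both are handled by the ``$\cup\,\{1+\max I_n\}$'' device together with the nestedness of the $I_n$.
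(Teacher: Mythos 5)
The paper does not prove this lemma at all---it is quoted verbatim from Christensen \cite[Lemma 2.1.1]{Chr} and used as a black box---so there is no in-paper argument to compare against; your proposal has to stand on its own. The overall strategy is the standard one and the easy direction (ii)$\Rightarrow$(i) is correct as written. However, there is one concrete flaw in the (i)$\Rightarrow$(ii) direction: your claim that ``the enlargement by $1+\max I_n$ guarantees $\bigcup_n I_n=\N$'' is false. The failure of (ii) only hands you \emph{some} finite $I_{n+1}\supseteq F_{n+1}$ with $\|y-s_{I_{n+1}}\|>\epsilon_0$; you have no control over which extra indices it contains. For instance, nothing prevents $I_1=\{1\}$, $I_2=\{1,2,100\}$, $I_3=\{1,2,100,101\}$, $I_4=\{1,2,100,101,102\}$, and so on: the maxima tend to infinity, but the indices $3,4,\dots,99$ are never swept up, so $\bigcup_n I_n\neq\N$. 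In that case your block enumeration is only an injection onto a proper subset of $\N$, not a permutation---which is exactly the failure mode you identified as one of the ``two points requiring care,'' but the device you chose does not avert it.

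The repair is one line: replace $F_{n+1}:=I_n\cup\{1+\max I_n\}$ by $F_{n+1}:=I_n\cup\{1,2,\dots,n+1\}$ (or $I_n\cup\{\min(\N\setminus I_n)\}$). Then $\{1,\dots,n+1\}\subseteq I_{n+1}$ for every $n$, so the nested chain exhausts $\N$, the block-by-block enumeration is a genuine permutation $\sigma$, and the partial sums of $\sum_k y_{\sigma(k)}$ at the block ends equal $s_{I_n}$ and stay at distance greater than $\epsilon_0$ from $y$; hence this rearrangement does not converge to $y$ and (i) fails. With that single modification the argument is complete and correct. (A small cosmetic point: state the base case explicitly by applying the failure of (ii) to $F=\emptyset$ to obtain $I_1$.)
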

According to lemma \ref{lcu}, $\left[\sum_{\stackrel{k,j\in\Z}{\psi\in\Psi}}\right]^u$ means that, for each $f\in L^2(\R)$, one must take the limit of sums over suitable finite sets of triplets $(k,j,\psi)\in \Z\times\Z\times\Psi$.
This is the correct way to interpret the sums and avoids any possible ``infinity'' in partial calculations dealing with the expressions we will encounter in what follows. 

\medskip

In order to take advantage of the {dilation representation} of proposition \ref{pdft}, the matrix $\big(\alpha_{i,n}^{s,j,m}\big)$, defined by (\ref{chm}), must appear on stage. 
Next result gives a expression for the matrix elements and fibers of the decomposable operator $\G S\G^{-1}$ associated with the Bessel wavelet system $X$. They are written in terms of the $\alpha_{i,n}^{s,j,m}$'s and the components $\big\{\hat\psi_i^{(n)}\big\}$ of each $\psi\in\Psi$ (and not in terms of the components $\big\{\tilde\psi_{s,j}^{(m)}\big\}$!). 
The result is given for the ONB $\big\{u_{s,l}\big\}_{l\in\J,s=\pm}$ of $l^2(\J)\oplus l^2(\J)$ fixed in proposition \ref{pdft}.

\begin{theor}\label{th2}
For a Bessel wavelet system $X$ in $L^2(\R)$ of the form (\ref{ws}), the operator $S=T_XT_X^*$ in the {dilation representation} of proposition \ref{pdft}, $\G S\G^{-1}$, has matrix elements $[\G S\G^{-1}]_{s,l}^{s',l'}$ given by
$$
\begin{array}{rl}
[\G S\G^{-1}]_{s,l}^{s',l'}:&L^2(\partial\D,\C)\longrightarrow L^2(\partial\D,\C)
\\[1ex]
&\ds h(\om)\mapsto h(\om)\,\sum_{\sigma}\om^{\sigma}
\sum_{\stackrel{i,n}{i',n'}}
\big({\sum_{k,j\in\Z}}^u\, \overline{\alpha_{i,n+j}^{s,l,k}}\,\alpha_{i',n'+j}^{s',l',k+\sigma}\big)\big({\sum_{\psi\in\Psi}}^u\, \overline{\hat\psi_{i}^{(n)}}\,\hat\psi_{i'}^{(n')}\big)\,,
\end{array}
$$
where $l,l'\in\J$, $s,s'=\pm$.
Thus, the fibers of $\G S\G^{-1}=S(\om)$ are
$$
\begin{array}{rl}
S(\om):&l^2(\J)\oplus l^2(\J)\longrightarrow l^2(\J)\oplus l^2(\J)
\\[1ex]
&\ds u_{s,l}\mapsto \bigoplus_{s',l'} u_{s',l'}\,\sum_{\sigma}\om^{\sigma}
\sum_{\stackrel{i,n}{i',n'}}
\big({\sum_{k,j\in\Z}}^u\, \overline{\alpha_{i,n+j}^{s,l,k}}\,\alpha_{i',n'+j}^{s',l',k+\sigma}\big)\big({\sum_{\psi\in\Psi}}^u\, \overline{\hat\psi_{i}^{(n)}}\,\hat\psi_{i'}^{(n')}\big)\,,
\end{array}
$$
for a.e. $\om\in\partial\D$.
\end{theor}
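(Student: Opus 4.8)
The plan is to exploit that, by Theorem \ref{th1}, $\G S\G^{-1}=S(\om)$ is decomposable, so that each matrix element $[\G S\G^{-1}]_{s,l}^{s',l'}$ relative to the \emph{constant} ONB $\{u_{s,l}\}_{l\in\J,s=\pm}$ is forced to be a multiplication operator. Indeed, since $S(\om)$ acts fiberwise, feeding a vector $h(\om)\,u_{s,l}$ into $\G S\G^{-1}$ and reading off the $u_{s',l'}$-component produces $h(\om)\,\<S(\om)u_{s,l},u_{s',l'}\>_{l^2(\J)\oplus l^2(\J)}$; equivalently, this matrix element commutes with every diagonalizable operator and hence is itself diagonalizable by Proposition \ref{prop1vn}(ii). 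Thus $[\G S\G^{-1}]_{s,l}^{s',l'}$ is multiplication by the scalar function $m_{s,l}^{s',l'}(\om):=\<S(\om)u_{s,l},u_{s',l'}\>$, and the whole statement reduces to identifying the Fourier coefficients of this $L^\infty(\partial\D)$ function.

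The key device is that the $\sigma$-th Fourier coefficient $c_\sigma$ of $m_{s,l}^{s',l'}$ can be extracted by testing $\G S\G^{-1}$ against the elementary vectors $u_{s,l}$ and $\om^\sigma u_{s',l'}$, namely $c_\sigma=\<\G S\G^{-1}u_{s,l},\om^\sigma u_{s',l'}\>$. The point is that Proposition \ref{pdft} makes the relevant preimages completely explicit: since the coordinates of $\G f$ are $\sum_m\om^m\<f,K^{(m)}_{s,l}\>$ and $\{K^{(m)}_{s,l}\}$ is an ONB, one reads off $\G^{-1}u_{s,l}=K^{(0)}_{s,l}$ and $\G^{-1}(\om^\sigma u_{s',l'})=K^{(\sigma)}_{s',l'}$. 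Using unitarity of $\G$ and $S=T_XT_X^*$, this turns the problem into a concrete inner product in $L^2(\R)$,
$$
c_\sigma=\<SK^{(0)}_{s,l},K^{(\sigma)}_{s',l'}\>_{L^2(\R)}={\sum_{\stackrel{k,j\in\Z}{\psi\in\Psi}}}^u\<K^{(0)}_{s,l},\psi_{k,j}\>_{L^2(\R)}\,\overline{\<K^{(\sigma)}_{s',l'},\psi_{k,j}\>_{L^2(\R)}}\,.
$$

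It then remains to evaluate the two inner products. Expanding $\psi$ in the $L$-basis via (\ref{fdobl2}) and using $T^jL_i^{(n)}=L_i^{(n+j)}$ gives $\psi_{k,j}=D^kT^j\psi=\sum_{i,n}\hat\psi_i^{(n)}D^kL_i^{(n+j)}$. Moving $D^k$ across by unitarity, noting $D^{-k}K^{(m)}_{s,l}=K^{(m-k)}_{s,l}$, and invoking the definition (\ref{chm}) of the change-of-representation matrix yields $\<K^{(0)}_{s,l},\psi_{k,j}\>=\sum_{i,n}\overline{\hat\psi_i^{(n)}}\,\overline{\alpha_{i,n+j}^{s,l,-k}}$ and, likewise, $\<K^{(\sigma)}_{s',l'},\psi_{k,j}\>=\sum_{i',n'}\overline{\hat\psi_{i'}^{(n')}}\,\overline{\alpha_{i',n'+j}^{s',l',\sigma-k}}$. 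Substituting into the expression for $c_\sigma$, relabelling $k\mapsto-k$ (a harmless reindexing of a sum over all of $\Z$), and regrouping the summations produces exactly the stated coefficient, so that $m_{s,l}^{s',l'}(\om)=\sum_\sigma\om^\sigma c_\sigma$ is the claimed multiplier. The matrix-element formula follows at once, and the fibers are recovered from $S(\om)u_{s,l}=\bigoplus_{s',l'}m_{s,l}^{s',l'}(\om)\,u_{s',l'}$.

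The one genuinely delicate point is the regrouping in the last step: the triple series over $(k,j,\psi)$ converges only unconditionally, not absolutely, so splitting it into a product of a $(k,j)$-sum and a $\psi$-sum with the $(i,n,i',n')$-sums pulled outside is not automatic. This is precisely what the superindices ``$u$'' in the statement are flagging, and the justification rests on Lemma \ref{lcu} together with the convention explained immediately after it: the full pairing is the honest $l^2(X)$ inner product $\<T_X^*K^{(0)}_{s,l},T_X^*K^{(\sigma)}_{s',l'}\>$, so it converges, and each rearrangement must be read as a limit over finite subsets of $\Z\times\Z\times\Psi$. Controlling these finite-subset limits, rather than the elementary algebra of the $\alpha$'s and $\hat\psi$'s, is where the real work lies.
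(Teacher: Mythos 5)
Your argument is correct and arrives at the stated formula through the same computational core as the paper --- evaluating $S$ on the basis functions $K_{s,l}^{(m)}$, whose images under $\G$ are the elementary vectors $\om^m u_{s,l}$ --- but it is organized differently in two genuine respects. First, where the paper computes $\G S\G^{-1}(\om^m u_{s,l})$ for \emph{every} $m$ and then appeals to $\{\om^m\}_{m\in\Z}$ being an ONB of $L^2(\partial\D,\C)$ to identify each matrix element as a multiplier, you invoke the decomposability already secured in Theorem \ref{th1} to know a priori that the matrix element is multiplication by some $L^\infty$ function, and then only need its Fourier coefficients $c_\sigma=\<SK_{s,l}^{(0)},K_{s',l'}^{(\sigma)}\>_{L^2(\R)}$; this is a real economy and also sidesteps the (minor) density argument needed to pass from the action on $\{\om^m\}$ to the action on all of $L^2(\partial\D,\C)$. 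Second, the paper expresses $\<K_{s,l}^{(m)},\psi_{k,j}\>$ and $\G\psi_{k,j}$ through the $K$-basis coefficients $\tilde\psi_{r,p}^{(q)}$ and must then convert to the $\hat\psi_{i}^{(n)}$'s via the external result \cite[Lemma 5]{GS11a} together with the auxiliary identity (\ref{ee1}); you instead expand $\psi_{k,j}=\sum_{i,n}\hat\psi_i^{(n)}D^kL_i^{(n+j)}$ in the translated $L$-basis and move $D^k$ across the inner product by unitarity, which produces the $\alpha$'s and $\hat\psi$'s in one step and makes the computation self-contained (your index bookkeeping, including the relabelling $k\mapsto -k$, checks out against the statement). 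What the two routes share --- and what you rightly isolate as the only delicate point --- is the final regrouping that pulls the $(i,n,i',n')$-sums outside the $(k,j,\psi)$-sum: the joint multiple series need not converge absolutely, so the rearrangement requires the finite-subset limit reading of Lemma \ref{lcu}; the paper performs exactly the same regrouping with no more justification than the superindex $u$ convention, so your proposal is not weaker than the published proof on this point, and your observation that the $(k,j,\psi)$-sum is the honest $l^2(X)$ pairing $\<T_X^*K^{(0)}_{s,l},T_X^*K^{(\sigma)}_{s',l'}\>$ (hence absolutely convergent by Cauchy--Schwarz) is the right foothold for completing that justification.
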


\begin{proof}
The following identities are direct consequences of 
(\ref{fdobl2}), (\ref{fdobl1}) and (\ref{chm}):
\be\label{ee1}
\begin{array}{rl}
\ds \sum_{r,p,q}\overline{\tilde\psi_{r,p}^{(q)}}\,\alpha_{i,n-j}^{r,p,q}
&\ds = \sum_{r,p,q}\overline{\tilde\psi_{r,p}^{(q)}}\,\<L_i^{(n-j)},K_{r,p}^{(q)}\>_{L^2(\R)}=
\\[2ex]
&\ds = \<L_i^{(n-j)},\sum_{r,p,q}\tilde\psi_{r,p}^{(q)}\,K_{r,p}^{(q)}\>_{L^2(\R)}=\<L_i^{(n-j)},\psi\>_{L^2(\R)}=\overline{\hat\psi_i^{(n-j)}}.
\end{array}
\ee
For $f=K_{s,l}^{(m)}$ one has
$$
\om^m u_{s,l}\stackrel{\G^{-1}}{\mapsto} K_{s,l}^{(m)}\stackrel{S}{\mapsto} SK_{s,l}^{(m)}={\sum_{\stackrel{k,j\in\Z}{\psi\in\Psi}}}^u\< K_{s,l}^{(m)},\psi_{k,j}\>_{L^2(\R)}\,\psi_{k,j}\stackrel{\G}{\mapsto} {\sum_{\stackrel{k,j\in\Z}{\psi\in\Psi}}}^u\<K_{s,l}^{(m)},\psi_{k,j}\>_{L^2(\R)}\,\G\psi_{k,j}.
$$
Using (\ref{fdobl1}) and the definition of $\G$ in proposition \ref{pdft},
$$
\begin{array}{rl}
\ds{\sum_{\stackrel{k,j\in\Z}{\psi\in\Psi}}}^u\<K_{s,l}^{(m)},\psi_{k,j}\>\G\psi_{k,j}
&\ds = {\sum_{\stackrel{k,j\in\Z}{\psi\in\Psi}}}^u\overline{\widetilde{[D^kT^j\psi]}_{s,l}^{(m)}}\,\big(\sum_{m',s',l'}\om^{m'}\,\widetilde{[D^kT^j\psi]}_{s',l'}^{(m')}\,u_{s',l'}\big)=
\\[2ex]
&\ds = \sum_{s',l'}\big(\om^m\,{\sum_{\stackrel{k,j\in\Z}{\psi\in\Psi}}}^u\sum_{m'}\om^{m'-m}\overline{\widetilde{[T^j\psi]}_{s,l}^{(m-k)}}\,\widetilde{[T^j\psi]}_{s',l'}^{(m'-k)}\big)\,u_{s',l'}=
\\[2ex]
&\ds = \sum_{s',l'}\big(\om^m\,{\sum_{\stackrel{k,j\in\Z}{\psi\in\Psi}}}^u\sum_{\sigma}\om^{\sigma}\overline{\widetilde{[T^j\psi]}_{s,l}^{(m-k)}}\,\widetilde{[T^j\psi]}_{s',l'}^{(m-k+\sigma)}\big)\,u_{s',l'}\,.
\end{array}
$$
By \cite[Lemma 5]{GS11a}, the last expression coincides with 
$$
\sum_{s',l'}\Big(\om^m\,{\sum_{\stackrel{k,j\in\Z}{\psi\in\Psi}}}^u
\sum_{\sigma}\om^{\sigma}
\big(\sum_{i,n} \overline{\alpha_{i,n}^{s,l,m-k}}\,\sum_{r,p,q} \alpha_{i,n-j}^{r,p,q}\overline{\tilde\psi_{r,p}^{(q)}}\big)
\big(\sum_{i',n'} \alpha_{i',n'}^{s',l',m-k+\sigma}\,\sum_{r',p',q'} \overline{\alpha_{i',n'-j}^{r',p',q'}}\tilde\psi_{r',p'}^{(q')}\big)\Big)\,u_{s',l'}
$$
and, by (\ref{ee1}), this is equal to
$$
\begin{array}{l}
\ds\sum_{s',l'}\Big(\om^m\,{\sum_{\stackrel{k,j\in\Z}{\psi\in\Psi}}}^u
\sum_{\sigma}\om^{\sigma}
\big(\sum_{i,n} \overline{\alpha_{i,n}^{s,l,m-k}}\,\overline{\hat\psi_{i}^{(n-j)}}\big)
\big(\sum_{i',n'} \alpha_{i',n'}^{s',l',m-k+\sigma}\,\hat\psi_{i'}^{(n'-j)}\big)\Big)\,u_{s',l'}=
\\[2ex]
\ds = \sum_{s',l'}\Big(\om^m\,\sum_{\sigma}\om^{\sigma}
\sum_{\stackrel{i,n}{i',n'}}
\big({\sum_{k,j\in\Z}}^u\, \overline{\alpha_{i,n+j}^{s,l,k}}\,\alpha_{i',n'+j}^{s',l',k+\sigma}\big)\big({\sum_{\psi\in\Psi}}^u\, \overline{\hat\psi_{i}^{(n)}}\,\hat\psi_{i'}^{(n')}\big)\Big)\,u_{s',l'}\,.
\end{array}
$$
Thus, the matrix element $[\G S\G^{-1}]_{s,l}^{s',l'}$ satisfies
$$
[\G S\G^{-1}]_{s,l}^{s',l'}(\om^m)=\om^m\,\sum_{\sigma}\om^{\sigma}
\sum_{\stackrel{i,n}{i',n'}}
\big({\sum_{k,j\in\Z}}^u\, \overline{\alpha_{i,n+j}^{s,l,k}}\,\alpha_{i',n'+j}^{s',l',k+\sigma}\big)\big({\sum_{\psi\in\Psi}}^u\, \overline{\hat\psi_{i}^{(n)}}\,\hat\psi_{i'}^{(n')}\big)\,,\quad (m\in\Z)\,.
$$
Since $\{\om^m\}_{m\in\Z}$ is an ONB of $L^2(\partial\D,\C)$, we get the result.
\end{proof}

In particular, for tight wavelet frames, 

\begin{coro}\label{coro10}
Let $X$ be a wavelet system in $L^2(\R)$ of the form (\ref{ws}) and such that (\ref{aePsi}) is satisfied.. 
Then, $X$ is a {tight frame} for $L^2(\R)$, with frame bound $B$, if and only if
\be\label{sesg}
\sum_{\stackrel{i,n}{i',n'}}
\big({\sum_{k,j\in\Z}}^u\, \overline{\alpha_{i,n+j}^{s,l,k}}\,\alpha_{i',n'+j}^{s',l',k+\sigma}\big)\big({\sum_{\psi\in\Psi}}^u\, \overline{\hat\psi_{i}^{(n)}}\,\hat\psi_{i'}^{(n')}\big)=
B\,\delta_{s,s'}\delta_{l-l'}\delta_\sigma,
\ee
where $s,s'=\pm$, $l,l'\in\J$, $\sigma\in\Z$, and $\delta$ denotes the Dirac $\delta$-function. 
\end{coro}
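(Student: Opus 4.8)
The plan is to read (\ref{sesg}) as the coordinate-wise form of the abstract criterion of Theorem \ref{th1}(3) once the matrix elements supplied by Theorem \ref{th2} are inserted. Theorem \ref{th1}(3) states that $X$ is a tight frame with bound $B$ if and only if $\G S\G^{-1}=B\,I_{l^2(\J)\oplus l^2(\J)}$ as a decomposable operator on $L^2(\partial\D;l^2(\J)\oplus l^2(\J))$. Expressed through the fixed ONB $\{u_{s,l}\}$, the constant diagonalizable operator $B\,I$ has matrix elements $[B\,I]_{s,l}^{s',l'}$ equal to $B\,\delta_{s,s'}\delta_{l,l'}$ times the identity of $L^2(\partial\D,\C)$, i.e.\ multiplication by the constant function $B\,\delta_{s,s'}\delta_{l,l'}$. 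Hence the whole content of the corollary is the equivalence between the operator identity $\G S\G^{-1}=B\,I$ and the scalar identities (\ref{sesg}).

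For the ``only if'' direction I would first observe that a tight frame is in particular a Bessel system, so Theorem \ref{th2} applies and gives $[\G S\G^{-1}]_{s,l}^{s',l'}$ as multiplication by $\om\mapsto\sum_\sigma\om^\sigma\,c_{s,l}^{s',l'}(\sigma)$, where $c_{s,l}^{s',l'}(\sigma)$ denotes the left-hand side of (\ref{sesg}). By Theorem \ref{th1}(3) this multiplication operator equals multiplication by the constant $B\,\delta_{s,s'}\delta_{l,l'}$ for a.e.\ $\om$. Two symbols of coinciding multiplication operators agree a.e., and since $\{\om^\sigma\}_{\sigma\in\Z}$ is an ONB of $L^2(\partial\D,\C)$ their Fourier coefficients must match term by term; reading off the $\sigma$-th coefficient yields $c_{s,l}^{s',l'}(\sigma)=B\,\delta_{s,s'}\delta_{l,l'}\delta_\sigma$, which (with $\delta_{l,l'}=\delta_{l-l'}$ and $\delta_\sigma$ selecting $\sigma=0$) is precisely (\ref{sesg}).

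The converse runs the same computation backwards, and here I expect the main obstacle. Assuming (\ref{sesg}), each symbol $\sum_\sigma\om^\sigma\,c_{s,l}^{s',l'}(\sigma)$ collapses to the constant $B\,\delta_{s,s'}\delta_{l,l'}$, so the formula of Theorem \ref{th2} \emph{would} give $\G S\G^{-1}=B\,I$ at once; the difficulty is that Theorem \ref{th2} is available only once $X$ is known to be Bessel, which is not granted a priori by (\ref{aePsi}) alone. To break this circularity I would work with the positive truncations $S_F=T_{X_F}T_{X_F}^*$ over finite subfamilies $X_F\subset X$ (each trivially Bessel, with matrix elements given by the finite-sum versions of the Theorem \ref{th2} formula) and pass to the monotone limit: $\<S_F f,f\>$ increases to $\sum_{x\in X}|\<f,x\>|^2$, while (\ref{sesg}) identifies the limiting sesquilinear form with $B\,\<f,f\>$ on the dense span of the $K_{s,l}^{(m)}$. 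This bounds the Bessel sum by $B\,\|f\|^2$, so $X$ is Bessel, $S$ is a well-defined bounded operator, and $S=B\,I_{L^2(\R)}$; Theorem \ref{th1}(3) then certifies that $X$ is a tight frame with bound $B$. The only genuinely technical point is the justification of this limit interchange, which rests on the positivity of the $S_F$ and on the unconditional-convergence bookkeeping already codified by the $\sum^u$ symbols and Lemma \ref{lcu}.
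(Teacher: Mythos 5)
Your proposal follows the same route as the paper: Theorem \ref{th1}(3) reduces tightness to the operator identity $\G S\G^{-1}=B\,I_{l^2(\J)\oplus l^2(\J)}$, and comparing the Fourier coefficients of the multiplication symbols supplied by Theorem \ref{th2} with the constant $B\,\delta_{s,s'}\delta_{l,l'}$ gives (\ref{sesg}); the paper's own proof is exactly this two-line reduction. The circularity you flag in the converse (Theorem \ref{th2} presupposes that $X$ is Bessel) is not addressed in the paper, and your truncation-and-monotone-limit argument is a sound way to close that gap rather than a different approach.
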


\begin{proof}
By theorem \ref{th1}, $X$ is a {tight frame} with frame bound $B$ if and only if $S=B\,I_{L^2(\R)}$ or, equivalently, $\G S\G^{-1}=B\, I_{l^2(\J)\oplus l^2(\J)}$, i.e., 
$S(\om)=B\, I_{l^2(\J)\oplus l^2(\J)}$, for a.e. $\om\in\partial\D$. Now, consider the expression of the fibers $S(\om)$ given in theorem \ref{th2}.
\end{proof}

%------------------------

\section{Extension principles for wavelet frames}\label{s5}

In the theory of wavelet frames one of the main trends of development is based on the so-called {\it extension principles} and {\it multiresolution analysis} (MRA). We include here a brief translation of this approach to the framework of the spectral techniques. The main tool for such translation is the {\it periodized Fourier transform} given in proposition \ref{ptft1}. 
As before, we restrict attention to the univariate case and dyadic dilation.

\begin{remark}\label{rm38} 
\rm
Note that the periodized Fourier transform of proposition \ref{ptft1} leads to the following notation: For $f\in L^2(\R)$,
$$
\begin{array}{l}
\hat f\text{ denotes the usual Fourier transform of $f$ defined in (\ref{FT})},
\\
\hat f_*=\F_*f \text{ denotes the periodized Fourier transform of $f$ given by (\ref{tft1})},
\\
\hat f_k\text{ is the $k$-component ($k\in\Z$) of the periodized Fourier transform $\hat f_*$ of $f$, see (\ref{tft1}) and (\ref{tft1a})}.
\end{array}
$$
\end{remark}

\begin{remark}\label{rm39} 
\rm
The {\it bracket product} defined by
\be\label{bp}
[f, g](\theta):=\sum_{k\in\Z} f(\theta+k)\,\overline{g(\theta+k)}\,,\quad \text{ for a.e. }\theta\in\R\text{ and }f,g\in L^2(\R)\,,
\ee
plays a key role in the theory of shift-invariant systems \cite{BVR94a}, mainly used in the Fourier domain. The bracket product has a clear meaning under the periodized Fourier transform. 
Indeed, going from $\theta\in\R$ to $\om=e^{2\pi i\theta}\in\partial\D$, one has
$$
[\hat f, \hat g](\theta)=\<\hat f_*(\omega),\hat g_*(\omega)\>_{l^2(\Z)}\,,\quad 
\text{ for a.e. }\theta\in\R\text{ and }f,g\in L^2(\R)\,.
$$
In particular,
$$
[\hat f, \hat f](\theta)=||\hat f_*(\omega)||_{l^2(\Z)}\,,\quad 
\text{ for a.e. }\theta\in\R\text{ and }f\in L^2(\R)\,.
$$
In what follows, given $f\in L^2(\R)$, we shall write
$$
\sigma(f):=\text{supp}[\hat f,\hat f]=\{\theta\in\R:[\hat f,\hat f](\theta)\neq0\}
$$
or
$$
\sigma(f):=\text{supp}||\hat f_*||_{l^2(\Z)}=\{\om\in\partial\D:||\hat f_*(\om)||_{l^2(\Z)}\neq0\}
$$
depending on the use of $\hat f$ or $\hat f_*$, respectively.
\end{remark}

Now, consider the principal shift-invariant subspace generated by a function $\phi\in L^2(\R)$, i.e., the $L^2(\R)$-closure of the subspace generated by the integer translates of $\phi$:
$$
V_0:=\overline{\text{span}}\{T^k\phi:k\in\Z\}\,.
$$
Let $P_0$ denote the orthogonal projection of $L^2(\R)$ onto $V_0$.

The following result characterizes the subspace $V_0$ and gives an expression for $P_0$ in the spectral model of proposition \ref{ptft1}.

\begin{prop}\label{psis}
Let $\phi\in L^2(\R)$, consider the subspace $V_0=\overline{\text{span}}\{T^k\phi:k\in\Z\}$ and let $P_0$ be the orthogonal projection of $L^2(\R)$ onto $V_0$.
Then, $f\in L^2(\R)$ belongs to $V_0$ if and only if  $\hat f_*(\om)$ is colineal to $\hat \phi_*(\om)$ for a.e. $\om\in\partial\D$. Thus, for $f\in L^2(\R)$,
\be\label{ep0}
[\widehat{P_0 f}]_*(\om)=
\left\{\begin{array}{ll}
\ds\frac{\<\hat f_*(\om),\hat\phi_*(\om)\>_{l^2(\Z)}}{||\hat\phi_*(\om)||^2_{l^2(\Z)}}\,\hat\phi_*(\om)\,,& \text{ for a.e. }\om\in\sigma(\phi)\,,
\\
0\,,& \text{ for a.e. }\om\notin\sigma(\phi)\,.
\end{array}\right.
\ee
\end{prop}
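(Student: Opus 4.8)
The plan is to work entirely in the periodized Fourier model of Proposition~\ref{ptft1}, where the translation operator $T$ becomes multiplication by $\om$. The key observation is that under $\F_*$, the generator $\phi$ maps to $\hat\phi_*(\om)\in l^2(\Z)$ and the integer translate $T^k\phi$ maps to $\om^k\,\hat\phi_*(\om)$. Thus every finite linear combination $\sum_k c_k\,T^k\phi$ has image $p(\om)\,\hat\phi_*(\om)$, where $p(\om)=\sum_k c_k\,\om^k$ is a trigonometric polynomial. Taking $L^2$-closures, the image of $V_0$ under $\F_*$ should be the closure of $\{p(\om)\,\hat\phi_*(\om):p\text{ trig.\ polynomial}\}$, and I would argue this closure consists precisely of the functions $\om\mapsto c(\om)\,\hat\phi_*(\om)$ that lie in $L^2(\partial\D;l^2(\Z))$, for measurable scalar $c$. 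This is the rigorous content of the phrase ``colineal to $\hat\phi_*(\om)$'': the fiber $\hat f_*(\om)$ is a scalar multiple of the fiber $\hat\phi_*(\om)$ for a.e.\ $\om$.

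First I would establish the forward inclusion: if $f\in V_0$, then $f$ is an $L^2$-limit of combinations $\sum_k c_k^{(n)}T^k\phi$, so $\hat f_*$ is the $L^2(\partial\D;l^2(\Z))$-limit of $p_n(\om)\,\hat\phi_*(\om)$; passing to an a.e.-convergent subsequence shows $\hat f_*(\om)$ is colineal to $\hat\phi_*(\om)$ for a.e.\ $\om$, and necessarily $\hat f_*(\om)=0$ wherever $\hat\phi_*(\om)=0$, i.e.\ off $\sigma(\phi)$. For the converse, given $f$ whose fibers are colineal to those of $\phi$, I would exhibit $f$ as a limit of translates. The natural route is to define the scalar multiplier $c(\om):=\langle\hat f_*(\om),\hat\phi_*(\om)\rangle_{l^2(\Z)}/\|\hat\phi_*(\om)\|^2_{l^2(\Z)}$ on $\sigma(\phi)$ (and $0$ elsewhere), so that $\hat f_*(\om)=c(\om)\,\hat\phi_*(\om)$ a.e., and then approximate the $L^2(\partial\D)$-function $c$ by trigonometric polynomials $p_n$ in the appropriate weighted sense, using that $\|p_n\,\hat\phi_*-c\,\hat\phi_*\|$ is controlled by $\int |p_n-c|^2\,\|\hat\phi_*\|^2$, which is the norm in the measure $\|\hat\phi_*(\om)\|^2\,d\om/(2\pi)$ on $\sigma(\phi)$; density of trigonometric polynomials in that weighted $L^2$ space gives the conclusion.

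Once the characterization of $V_0$ is in hand, the projection formula~(\ref{ep0}) follows by verifying the two defining properties of an orthogonal projection fiberwise. For each $\om\in\sigma(\phi)$, the right-hand side of~(\ref{ep0}) is exactly the orthogonal projection in the Hilbert space $l^2(\Z)$ of the vector $\hat f_*(\om)$ onto the one-dimensional line spanned by $\hat\phi_*(\om)$; off $\sigma(\phi)$ it is zero. Since $\F_*$ is unitary and carries $V_0$ onto the space of fields colineal to $\hat\phi_*$, the operator defined fiberwise by this rank-one projection is an orthogonal projection with the correct range, and by uniqueness of the orthogonal projection onto $V_0$ it must coincide with $\F_* P_0 \F_*^{-1}$. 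I would also check that the resulting field lies in $L^2(\partial\D;l^2(\Z))$, which holds because projection does not increase the fiber norm.

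The main obstacle is the converse inclusion, specifically the density step: showing that an arbitrary measurable colineal field $c(\om)\,\hat\phi_*(\om)$ can be approximated by trigonometric polynomials times $\hat\phi_*$. The subtlety is that the multiplier $c$ need only be measurable and square-integrable against the (possibly vanishing) weight $\|\hat\phi_*(\om)\|^2$, so the approximation must be carried out in the weighted space $L^2(\sigma(\phi),\|\hat\phi_*\|^2\,d\om/(2\pi))$ rather than in ordinary $L^2(\partial\D)$; one must confirm that trigonometric polynomials remain dense there (they do, since they are dense in $L^2$ of any finite Borel measure on $\partial\D$ whose support avoids no arc essentially, and in general approximation in a finite measure on the circle is standard). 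Handling the set $\sigma(\phi)^c$, where $\hat\phi_*$ vanishes and the multiplier is irrelevant, is straightforward once this density is secured.
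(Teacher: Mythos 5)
Your argument is correct, but it reaches the characterization of $V_0$ by a different mechanism than the paper. The paper works with the orthogonal complement: using that $\F_*$ intertwines $T$ with multiplication by $\om$, it shows $f\perp V_0$ iff all Fourier coefficients of the $L^1(\partial\D)$ function $\om\mapsto\<\hat f_*(\om),\hat\phi_*(\om)\>_{l^2(\Z)}$ vanish, i.e.\ iff $\hat f_*(\om)\perp\hat\phi_*(\om)$ a.e., and then obtains the colinearity description of $V_0$ by duality; this is short and gives the pointwise description of $V_0^\perp$ as a by-product. You instead prove the inclusion ``colineal $\Rightarrow$ $f\in V_0$'' directly, by writing $\hat f_*=c\,\hat\phi_*$ and approximating the multiplier $c$ by Laurent (trigonometric) polynomials in the weighted space $L^2\big(\partial\D,\|\hat\phi_*(\om)\|^2_{l^2(\Z)}\,d\om/(2\pi)\big)$; since that weight integrates to $\|\phi\|^2<\infty$, density of trigonometric polynomials in $L^2$ of a finite Borel measure on the circle (Stone--Weierstrass plus regularity) does the job, and your forward direction via an a.e.-convergent subsequence supplies the closedness that the paper's duality step implicitly needs. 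Your route is more constructive and exhibits the multiplier $c=H_*^f$ explicitly, which anticipates Corollary~\ref{bvr2.14}; the paper's route is shorter and avoids the weighted-approximation step. Both treat the projection formula~(\ref{ep0}) identically, as the fiberwise rank-one orthogonal projection onto the line spanned by $\hat\phi_*(\om)$, and your check that the resulting field is measurable and norm-decreasing fiberwise closes the argument. One cosmetic point: your hedge about measures ``whose support avoids no arc essentially'' is unnecessary and slightly misleading --- density of trigonometric polynomials holds in $L^2(\mu)$ for every finite Borel measure $\mu$ on $\partial\D$, as you then correctly assert.
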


\begin{proof}
Due to (\ref{ditf}), one has $\F_*V_0=\overline{\text{span}}\{\om^k\cdot\hat\phi_*(\om):k\in\Z\}$.
Since $\F_*$ is unitary, $[\F_*V_0]^\perp=\F_*[V_0^\perp]$. Thus, $f\in V_0^\perp$ if and only if
$$
\<\hat f_*,\om^k\hat\phi_*\>_{L^2(\partial\D,l^2(\Z))}=\int_{\partial\D}\om^{-k}\<f_*(\om),\hat\phi_*(\om)\>_{l^2(\Z)}\,\frac{d\om}{2\pi}=0\,,\quad (k\in\Z)\,.
$$
The last condition is equivalent to 
$$
\<f_*(\om),\hat\phi_*(\om)\>_{l^2(\Z)}=0\,,\quad \text{ for a.e }\om\in\partial {\D}\,.
$$
Thus, for each $f\in L^2(\R)$, $f\in V_0$ if and only if $\hat f_*(\om)$ is colineal to $\hat \phi_*(\om)$ for a.e. $\om\in\partial\D$. Then, for each fixed $\om\in\partial\D$, one gets (\ref{ep0}) for the orthogonal projection of $l^2(\Z)$ onto the $1$-dimensional subspace generated by $\hat\phi_*(\om)$ .
\end{proof}

Rewriting proposition \ref{psis} in terms of the usual Fourier transform defined in (\ref{FT}) and the bracket product given in remark \ref{rm39} one obtains the following two classical results in the theory of shift-invariant subspaces --see, for example, Theorems 2.9 and 2.14 in \cite{BVR94b}--.

\begin{coro}\label{bvr2.9}
For each $f\in L^2(\R)$, $\widehat{P_0 f}=H^f\hat\phi$, where the $1$-periodic function $H^f$ is defined by
\be\label{ep1}
H^f(\theta)=\left\{\begin{array}{ll} 
\ds \frac{[\hat f,\hat\phi](\theta)}{[\hat \phi,\hat\phi](\theta)}\,,& \text{ for a.e. }\theta\in\sigma(\phi)\,, \\ 0\,,& \text{ for a.e. }\theta\notin\sigma(\phi)\,.\end{array}\right.
\ee
\end{coro}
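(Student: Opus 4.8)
The plan is to view Corollary \ref{bvr2.9} as nothing more than the transcription of Proposition \ref{psis} from the periodized Fourier domain back to the ordinary one, so that the whole argument runs on the dictionary between $\hat f_*$ and $\hat f$ provided by Proposition \ref{ptft1} and Remark \ref{rm39}. First I would apply the unitary $\F_*$ to $P_0 f$ and invoke (\ref{ep0}): for a.e. $\om=e^{2\pi i\theta}\in\sigma(\phi)$ the fiber $[\widehat{P_0 f}]_*(\om)$ equals the scalar
$c(\om):=\frac{\<\hat f_*(\om),\hat\phi_*(\om)\>_{l^2(\Z)}}{||\hat\phi_*(\om)||^2_{l^2(\Z)}}$
times $\hat\phi_*(\om)$, while $[\widehat{P_0 f}]_*(\om)=0$ off $\sigma(\phi)$.

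Next I would rewrite $c(\om)$ through the usual Fourier transform. By Remark \ref{rm39}, and taking due care of the complex conjugation dictated by (\ref{tft1a}) together with the convention that the $l^2(\Z)$ inner product is conjugate-linear in its second slot, the numerator $\<\hat f_*(\om),\hat\phi_*(\om)\>_{l^2(\Z)}$ corresponds to the bracket product $[\hat f,\hat\phi](\theta)$, while the denominator $||\hat\phi_*(\om)||^2_{l^2(\Z)}=[\hat\phi,\hat\phi](\theta)$ is real. Moreover the two descriptions of $\sigma(\phi)$ recorded in Remark \ref{rm39} --- as the support of $[\hat\phi,\hat\phi]$ in the variable $\theta$ and as the support of $||\hat\phi_*||_{l^2(\Z)}$ in the variable $\om$ --- agree under $\om=e^{2\pi i\theta}$, so the two regimes of (\ref{ep0}) and (\ref{ep1}) correspond.

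Then I would un-periodize. Identity (\ref{tft1a}) says that the $k$-th component of $\hat g_*(e^{2\pi i\theta})$ is $\overline{\hat g(\theta+k)}$ for every $g\in L^2(\R)$. Applying this with $g=P_0 f$ and with $g=\phi$ to the fiberwise identity $[\widehat{P_0 f}]_*(\om)=c(\om)\,\hat\phi_*(\om)$ gives, component by component, $\overline{\widehat{P_0 f}(\theta+k)}=c(\om)\,\overline{\hat\phi(\theta+k)}$ for a.e. $\theta\in[0,1)$ and all $k\in\Z$. Taking complex conjugates of both sides, and noting that $\overline{c(\om)}=H^f(\theta)$ precisely because the conjugation flagged in the previous paragraph now cancels, I obtain $\widehat{P_0 f}(\theta+k)=H^f(\theta)\,\hat\phi(\theta+k)$; since $H^f$ is $1$-periodic, $H^f(\theta)=H^f(\theta+k)$, and hence $\widehat{P_0 f}(y)=H^f(y)\,\hat\phi(y)$ for a.e. $y\in\R$. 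Off $\sigma(\phi)$ both sides vanish, since there $\hat\phi(\theta+k)=0$ for all $k$ and $H^f=0$ by definition, so the equality holds everywhere.

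I expect no genuine obstacle, as the statement is a change of variables rather than a new theorem; the only points requiring attention are the careful bookkeeping of the complex conjugations in passing between $\hat f_*$ and $\hat f$, and the (routine) observation that the scalar prefactor $c(\om)$, being a function of $\om=e^{2\pi i\theta}$, is automatically $1$-periodic in $\theta$, which is exactly what lets the per-fiber identity globalize to a genuine identity of functions on $\R$.
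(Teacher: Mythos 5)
Your proposal is correct and follows essentially the same route as the paper: substitute the componentwise dictionary $\hat g_k(e^{2\pi i\theta})=\overline{\hat g(\theta+k)}$ of (\ref{tft1a}) into the fiberwise formula (\ref{ep0}) and recognize the bracket products of Remark \ref{rm39}. Your explicit tracking of the complex conjugation (so that $\overline{c(\om)}=H^f(\theta)$) is in fact slightly more careful than the paper's own write-up, which handles the same point by writing out the sums directly.
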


\begin{proof}
Looking at (\ref{tft1}) and (\ref{tft1a}), and since $\{u_k\}_{k\in\Z}$ is an orthonormal basis of $l^2(\Z)$, for $\om=e^{2\pi i\theta}$,
$$
\begin{array}{rl}
\ds\<\hat f_*(\om),\hat\phi_*(\om)\>_{l^2(\Z)}\,\hat\phi_*(\om) &
\ds =\bigoplus_{k\in\Z}\Big[\sum_{j\in\Z} \hat f_j(\om)\,\overline{\hat\phi_j(\om)}\Big]\,\hat\phi_k(\om)\,u_k\,,\quad \text{ for a.e }\om\in\partial {\D}\,,
\\
& \ds =\Big[\sum_{j\in\Z} \overline{\hat f(\theta+j)}\,\hat\phi(\theta+j)\Big]\,\bigoplus_{k\in\Z}\overline{\hat\phi(\theta+k)}\,u_k\,,\quad \text{ for a.e }\theta\in\R\,,
\end{array}
$$
$$
\begin{array}{rl}
\ds ||\hat\phi_*(\om)||_{l^2(\Z)} &
\ds =\sum_{j\in\Z} |\hat\phi_j(\om)|^2\,,\quad \text{ for a.e }\om\in\partial {\D}\,,
\\
& \ds = \sum_{j\in\Z} |\hat\phi(\theta+j)|^2\,,\quad \text{ for a.e }\theta\in\R\,,
\end{array}
$$
and
$$
\begin{array}{rl}
\ds [\widehat{P_\phi f}]_*(\om)
& \ds =\bigoplus_{k\in\Z} [\widehat{P_\phi f}]_k(\om)\,u_k\,,\quad \text{ for a.e }\om\in\partial {\D}\,,
\\
& \ds =\bigoplus_{k\in\Z} \overline{[\widehat{P_\phi f}](\theta+k)}\,u_k\,,\quad \text{ for a.e }\theta\in\R\,,
\end{array}
$$
Substituting these expressions in (\ref{ep0}) and using (\ref{bp}), one gets the result.
\end{proof}

\begin{coro}\label{bvr2.14}
Let $\phi\in L^2(\R)$ and consider the subspace $V_0:=\overline{\text{span}}\{T^k\phi:k\in\Z\}$. A function $f\in L^2(\R)$ is in $V_0$ if and only if $\hat f_*=H_*^f\hat\phi_*$ for some measurable function $H_*^f:\partial\D\to\C$ with $H_*^f\hat\phi_*\in L^2(\partial\D,l^2(\Z))$.
Equivalently, a function $f\in L^2(\R)$ is in $V_0$ if and only if $\hat f=H^f\hat\phi$ for some $1$-periodic measurable function $H^f:\R\to\C$ with $\tilde H^f\hat\phi\in L^2(\R)$.
For a function $f\in V_0$, both functions $H_*^f$ and $H_f$ are related by
\be\label{hfhft}
H_*^f(\om)=\overline{H^f(\theta)}\,,\quad \text{for a.e. }\om=e^{2\pi i\theta}\,. 
\ee
\end{coro}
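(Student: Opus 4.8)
The plan is to read off the first (periodized) equivalence directly from proposition \ref{psis} and then transport it to the usual Fourier transform through the identity (\ref{tft1a}).

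First I would observe that proposition \ref{psis} already asserts that $f\in V_0$ if and only if $\hat f_*(\om)$ is colineal to $\hat\phi_*(\om)$ for a.e. $\om\in\partial\D$. Colinearity a.e. is exactly the statement that there is a scalar function $H_*^f$ with $\hat f_*=H_*^f\hat\phi_*$, so the only real content of the first equivalence is to produce a \emph{measurable} such $H_*^f$ and to check the integrability side condition. On $\sigma(\phi)$, i.e. where $\hat\phi_*(\om)\neq0$, I would set
$$
H_*^f(\om):=\frac{\<\hat f_*(\om),\hat\phi_*(\om)\>_{l^2(\Z)}}{||\hat\phi_*(\om)||^2_{l^2(\Z)}}\,,
$$
and $H_*^f(\om):=0$ off $\sigma(\phi)$. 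This is measurable as a quotient of the measurable functions $\om\mapsto\<\hat f_*(\om),\hat\phi_*(\om)\>_{l^2(\Z)}$ and $\om\mapsto||\hat\phi_*(\om)||^2_{l^2(\Z)}$ (measurable because $\hat f_*,\hat\phi_*\in L^2(\partial\D,l^2(\Z))$, cf. remark \ref{rm39}), and by (\ref{ep0}) it satisfies $H_*^f\hat\phi_*=\hat f_*$ on $\sigma(\phi)$, while off $\sigma(\phi)$ both sides vanish by colinearity. The side condition $H_*^f\hat\phi_*\in L^2(\partial\D,l^2(\Z))$ is then automatic, since $H_*^f\hat\phi_*=\hat f_*$ and $f\in L^2(\R)$. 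The converse is immediate: a factorization $\hat f_*=H_*^f\hat\phi_*$ makes $\hat f_*(\om)$ colineal to $\hat\phi_*(\om)$ a.e., whence $f\in V_0$ by proposition \ref{psis}.

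Next I would pass to the usual Fourier transform componentwise using (\ref{tft1a}). Writing $\om=e^{2\pi i\theta}$, the $k$-th component of $\hat f_*=H_*^f\hat\phi_*$ reads $\overline{\hat f(\theta+k)}=H_*^f(\om)\,\overline{\hat\phi(\theta+k)}$ for all $k\in\Z$; taking complex conjugates and \emph{defining} $H^f(\theta):=\overline{H_*^f(e^{2\pi i\theta})}$ yields $\hat f(\theta+k)=H^f(\theta)\,\hat\phi(\theta+k)$. The function $H^f$ is $1$-periodic because $H_*^f$ is a genuine function of $\om=e^{2\pi i\theta}$, and its very definition is precisely the asserted relation (\ref{hfhft}). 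Using $1$-periodicity to replace $H^f(\theta)$ by $H^f(\theta+k)$ and letting $y=\theta+k$ run over all of $\R$ as $\theta$ ranges over $[0,1)$ and $k$ over $\Z$, I obtain $\hat f=H^f\hat\phi$ on $\R$; the integrability condition $H^f\hat\phi\in L^2(\R)$ is again just $\hat f\in L^2(\R)$, i.e.\ Plancherel. Reversing these steps (conjugating, periodizing, and restricting to a fundamental domain) recovers the factorization $\hat f_*=H_*^f\hat\phi_*$, so the two descriptions are genuinely equivalent.

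I expect the only delicate points to be bookkeeping rather than substance: handling the null set $\partial\D\setminus\sigma(\phi)$ where $\hat\phi_*$ vanishes (there colinearity forces $\hat f_*=0$ and $H_*^f$ is pinned down only by the arbitrary choice $H_*^f:=0$), ensuring the quotient defining $H_*^f$ is measurable, and verifying that the pointwise identity $\hat f=H^f\hat\phi$ holds on \emph{all} of $\R$ rather than merely on a fundamental domain --- which is exactly where the $1$-periodicity of $H^f$ enters. None of these requires more than the measurability of the bracket product (remark \ref{rm39}) and the elementary identity (\ref{tft1a}).
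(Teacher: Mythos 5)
Your proposal is correct and follows essentially the same route as the paper: the paper also derives the corollary directly from Proposition \ref{psis} (equivalently, the explicit projection formula of Corollary \ref{bvr2.9}, which is your choice of $H_*^f$ on $\sigma(\phi)$) and then obtains the relation (\ref{hfhft}) by the same componentwise conjugation of the identity $\hat f_k(\om)=H_*^f(\om)\hat\phi_k(\om)$ via (\ref{tft1a}). You simply spell out the measurability, the treatment of $\partial\D\setminus\sigma(\phi)$, and the periodization step in more detail than the paper does.
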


\begin{proof}
The result is a direct consequence of Proposition \ref{psis} and Corollary \ref{bvr2.9}. Moreover, by (\ref{tft1a}), for a.e. $\om=e^{2\pi i\theta}$, 
$\hat f_k(\om)=H_*^f(\om)\,\hat\phi_k(\om)$ if and only if $\overline{\hat f(\theta+k)}=H^f(e^{2\pi i\theta})\overline{\hat\phi(\theta+k)}$, for every $k\in\Z$; this implies (\ref{hfhft}).
\end{proof}

In order to accommodate the discussion to the cited references, we interchange the roles of the indices $j$ and $k$ used until now, that is, given a subset $\Psi$ of $L^2(\R)$, the wavelet system $X=X(\Psi)$ we consider from now on is not of the form (\ref{ws}), else of the form
\be\label{wsxx}
X(\Psi):=\big\{D^jT^k\psi: \psi\in\Psi,\,j,k\in\Z\big\}\,.
\ee
We apologize for the inconvenience.

In principle, the study focuses on wavelet systems $X(\Psi)$ of the form (\ref{wsxx}) that are derived from a {\it refinable function} $\phi\in L^2(\R)$.  Here refinability of $\phi$ means $D^{-1}\phi\in V_0$. By Corollary \ref{bvr2.14}, this is equivalent to the existence of a $1$-periodic measurable function $H^{D^{-1}\phi}$ on $\R$ (or measurable $H_*^{D^{-1}\phi}$ on $\partial\D$), the {\it refinement mask}, such that $\widehat{[D^{-1}\phi]}(\theta)=H^{D^{-1}\phi}(\theta)\hat\phi(\theta)$ (or $\widehat{[D^{-1}\phi]}_*(\om)=H_*^{D^{-1}\phi}(\om)\hat\phi_*(\om)$). According to Proposition \ref{bvr2.9},
\be\label{rmask}
\widehat{[D^{-1}\phi]}(\theta)=H^{D^{-1}\phi}(\theta)\hat\phi(\theta)
=\left\{\begin{array}{ll} 
\ds \frac{[\widehat{[D^{-1}\phi]},\hat\phi](\theta)}{[\hat \phi,\hat\phi](\theta)}\hat\phi(\theta)\,,& \text{ for a.e. }\theta\in\sigma(\phi)\,, \\ 0\,,& \text{ for a.e. }\theta\notin\sigma(\phi)\,.\end{array}\right.\,.
\ee
Or, by Proposition \ref{psis},
\be\label{rmaskw}
\widehat{[D^{-1}\phi]_*}(\om)=H_*^{D^{-1}\phi}(\om)\hat\phi_*(\om)
=
\left\{\begin{array}{ll}
\ds\frac{\<\widehat{[D^{-1}\phi]}_*(\om),\hat\phi_*(\om)\>_{l^2(\Z)}}{||\hat\phi_*(\om)||^2_{l^2(\Z)}}\,\hat\phi_*(\om)\,,& \text{ for a.e. }\om\in\sigma(\phi)\,,
\\
0\,,& \text{ for a.e. }\om\notin\sigma(\phi)\,.
\end{array}\right.
\ee

The next results collect well-known facts, see e.g. \cite{BVR93,BeLi98}.

\begin{prop}\label{rfma}
Let $\phi\in L^2(\R)$ be a refinable function, $V_0=\overline{\text{span}}\{T^k\phi:k\in\Z\}$, and let $V_j=D^jV_0$, for $j\in\Z$. Then:
\begin{itemize}
\item[(a)]
$V_j\subseteq V_{j+1}$, for $j\in\Z$.
\item[(b)]
$\cap_j V_j=\{0\}$. 
\item[(c)]
$\overline{\cup_j V_j}=L^2(\R)$ if and only if  
$\cup_j 2^j[\text{supp}\,\hat\phi]=\R$ (modulo a null-set),\footnote{Recall that here the support of an $L^2(\R)$-function $f$ is defined only modulo a
null-set as $\text{supp}\,f:=\{\theta\in\R:f(\theta)\neq 0\}$.}
 where, given $S\subseteq\R$, $2^jS:=\{2^j\theta:\theta\in S\}$. 
\end{itemize}
\end{prop}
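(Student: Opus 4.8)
The plan is to reduce all three assertions to the Fourier-side description of the spaces $V_j$ furnished by Corollary \ref{bvr2.14}. Since $[Df](x)=2^{1/2}f(2x)$ transforms into $\widehat{D^{-j}f}(\xi)=2^{j/2}\hat f(2^j\xi)$, Corollary \ref{bvr2.14} gives: $g\in V_j=D^jV_0$ if and only if $\hat g(\xi)=G(\xi)\,\hat\phi(2^{-j}\xi)$ for some $2^j$-periodic measurable $G$; in particular $\mathrm{supp}\,\hat g\subseteq 2^j\,\mathrm{supp}\,\hat\phi$. I also record the consequence of refinability: relation (\ref{rmask}) reads $\widehat{D^{-1}\phi}=H^{D^{-1}\phi}\hat\phi$ with $H^{D^{-1}\phi}$ $1$-periodic, and $\widehat{D^{-1}\phi}(\xi)=2^{1/2}\hat\phi(2\xi)$, so $\hat\phi(2\xi)\neq0$ forces $\hat\phi(\xi)\neq0$; writing $E:=\mathrm{supp}\,\hat\phi$ this says $\tfrac12 E\subseteq E$, i.e.\ $E\subseteq 2E$, whence $2^jE$ is nondecreasing in $j$.

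For (a) I would argue algebraically. Refinability is exactly $D^{-1}\phi\in V_0$, so applying the unitary $D$ gives $\phi\in V_1$. Next I show $V_1$ is $T$-invariant: using the commutation $TD=DT^2$ from Remark \ref{rm7} and the $T$-invariance of $V_0$ (its defining spanning set is permuted by $T$), one has $TV_1=TDV_0=DT^2V_0=DV_0=V_1$. Since $V_1$ is closed, $T$-invariant, and contains $\phi$, it contains every $T^k\phi$, hence $V_0=\overline{\mathrm{span}}\{T^k\phi\}\subseteq V_1$; applying $D^j$ yields $V_j\subseteq V_{j+1}$ for all $j$.

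For (b) and (c) I would pass to the Fourier side throughout. The easy half of (c) is immediate from the support inclusion: if $\R\setminus\bigcup_j2^jE$ has positive measure, any nonzero $f$ with $\hat f$ supported there is orthogonal to every $g\in V_j$, so $\overline{\bigcup_jV_j}\neq L^2(\R)$; this shows density forces $\bigcup_j2^jE=\R$ up to a null set. For the converse, and for (b), I would use that the $V_j$ are nested, so the projections $P_{V_j}=D^jP_0D^{-j}$ increase to the projection onto $\overline{\bigcup_jV_j}$ and decrease, as $j\to-\infty$, to the projection onto $\bigcap_jV_j$. Thus (b) is equivalent to $\lim_{j\to-\infty}\|P_{V_j}f\|=0$ and the remaining half of (c) to $\lim_{j\to+\infty}\|P_{V_j}f\|=\|f\|$, for all $f$; by density it suffices to verify these for $\hat f\in C_c(\R)$. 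With $h_j:=D^{-j}f$, Corollary \ref{bvr2.9} gives the explicit value $\|P_{V_j}f\|^2=\|P_0h_j\|^2=\int_{\sigma(\phi)\cap[0,1)}|[\hat h_j,\hat\phi](\theta)|^2/[\hat\phi,\hat\phi](\theta)\,d\theta$. As $j\to+\infty$ the support of $\hat h_j$ contracts to the origin, and using $E\subseteq 2E$ together with $\bigcup_j2^jE=\R$ I would build explicit near-minimizers $g_j\in V_j$ with $\hat g_j=G_j\,\hat\phi(2^{-j}\cdot)$, $G_j$ the $2^j$-periodization of $\hat f/\hat\phi(2^{-j}\cdot)$ on $\mathrm{supp}\,\hat f$, and show $\|f-g_j\|\to0$; as $j\to-\infty$ the mass of $\hat h_j$ spreads out and I would show the fibrewise projections onto the line through $\hat\phi_*(\om)$ vanish.

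The routine parts are (a) and the easy half of (c). The main obstacle is the limiting analysis of $\|P_{V_j}f\|^2$ in (b) and in the converse half of (c): a bare Cauchy--Schwarz bound on the fibre products $[\hat h_j,\hat\phi]$ is too lossy, returning only the measure of $\sigma(\phi)\cap[0,1)$ rather than the values $0$ or $\|f\|^2$, so the cancellation in the bracket product must be used. Concretely, the difficulty is to control the aliasing terms created by the $2^j$-periodization of $\hat f/\hat\phi(2^{-j}\cdot)$ when forming the approximants $g_j$, and symmetrically to prove that the spread-out fibres of $\hat h_j$ become asymptotically orthogonal to the one-dimensional $\hat\phi$-fibres as $j\to-\infty$. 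These are the classical multiresolution limits; I would carry them out by a Lebesgue-point and dominated-convergence argument along the dyadic orbit $\{2^n\xi\}$, as in \cite{BVR93,BeLi98}, where it is precisely the refinement-induced monotonicity $E\subseteq 2E$ that licenses passage to the limit.
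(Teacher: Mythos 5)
Your part (a) is correct and is in substance the paper's own argument: the paper deduces $V_{-1}\subseteq V_0$ from $V_{-1}=\overline{\text{span}}\{T^{2k}D^{-1}\phi:k\in\Z\}$ together with refinability and the shift-invariance of $V_0$, whereas you prove the equivalent inclusion $V_0\subseteq V_1$ via $TD=DT^2$; both then conjugate by powers of $D$. Your ``easy half'' of (c) is also correct and complete. For (b) and the converse half of (c) the comparison is of a different nature, because the paper gives no proof there at all: it cites \cite[Corollary 4.14]{BVR93} and \cite[Theorem 4.3]{BVR93}. Two remarks on your plan for those parts. First, for the converse of (c) you have chosen a harder road than necessary: the aliasing/periodization analysis you single out as the main obstacle can be avoided entirely. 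By (a), $\bigcup_jV_j=\bigcup_{j\ge0}V_j$, and for $j\ge0$ the same commutation relation shows $V_j$ is invariant under translation by $2^{-j}\Z$; hence $W:=\overline{\bigcup_jV_j}$ is invariant under all dyadic, and so all real, translations, and Wiener's theorem gives $\widehat{W}=L^2(\Omega)$ for a measurable $\Omega\subseteq\R$. Your own support computation ($\text{supp}\,\hat g\subseteq 2^j\,\text{supp}\,\hat\phi$ for $g\in V_j$, with equality realized by $D^j\phi$) then identifies $\Omega=\bigcup_j2^j\,\text{supp}\,\hat\phi$ up to a null set, and (c) follows softly --- this is exactly the ``main property'' the paper alludes to. Second, for (b) your reduction to $\lim_{j\to-\infty}\|P_{V_j}f\|=0$ is the standard route, but the limit itself is left as a programme deferred to \cite{BVR93,BeLi98}; that is no worse than the paper's one-line citation, but it is the one genuinely analytic step in the whole proposition. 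If you want it self-contained: for $\hat f$ bounded with compact support avoiding a neighbourhood of $0$, apply Cauchy--Schwarz in the bracket product only over the $O(2^{|j|})$ integers $k$ actually contributing; the count cancels the dilation factor $2^{-|j|}$ and leaves the ratio $\sum_{|\theta+k|\ge 2^{|j|}\varepsilon}|\hat\phi(\theta+k)|^2\big/[\hat\phi,\hat\phi](\theta)$, which is bounded by $1$ and tends to $0$ a.e., so dominated convergence finishes. With these two adjustments your outline closes into a complete proof; as written, (b) and the converse of (c) are sketches resting on the same external references the paper itself invokes.
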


\begin{proof}
(a): Since $\phi$ is refinable, one has $D^{-1}\phi\in V_0$. 
Being $D^{-1}$ continuous, 
$$
V_{-1}=D^{-1}V_0=\overline{\text{span}}\{D^{-1}T^k\phi:k\in\Z\}=\overline{\text{span}}\{T^{2k}D^{-1}\phi:k\in\Z\}\,.
$$
These facts, together with the shift-invariance of $V_0$, imply $V_{-1}\subseteq V_0$. Then, $V_j=D^{j+1}V_{-1}\subseteq D^{j+1}V_{0}=V_{j+1}$, for $j\in\Z$. This result has been proved by Benedetto and Li \cite[Theorem 4.4]{BeLi98} when the refinement mask $H^{D^{-1}\phi}$ belongs to $L^\infty(\partial\D)$.

(b): This result is a particular case of \cite[Corollary 4.14]{BVR93}.

(c): Here the main property is the shift-invariance of  $\overline{\cup_j V_j}$, so that its Fourier transform must be of the form $L^2(\Omega)$ for some measurable set $\Omega\subseteq\R$. See \cite[Theorem 4.3]{BVR93} for details.
\end{proof}

\begin{coro}\label{ccws}
Let $\phi\in L^2(\R)$ be a refinable function, $V_0=\overline{\text{span}}\{T^k\phi:k\in\Z\}$, and let $V_j=D^jV_0$, for $j\in\Z$. If $\hat\phi$ is
nonzero a.e. in some neighbourhood of the origin, then $\overline{\cup_j V_j}=L^2(\R)$.
\end{coro}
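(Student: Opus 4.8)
The plan is to reduce the statement to the support criterion already established in Proposition \ref{rfma}(c), which asserts that $\overline{\cup_j V_j}=L^2(\R)$ holds if and only if $\cup_{j\in\Z} 2^j[\text{supp}\,\hat\phi]=\R$ modulo a null-set. Thus it suffices to check that the hypothesis on $\hat\phi$ near the origin forces this union of dyadic dilates of the support to exhaust $\R$.

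First I would extract an explicit interval from the hypothesis. Since $\hat\phi$ is nonzero a.e. in some neighbourhood of the origin, there is $\epsilon>0$ with $\hat\phi(\theta)\neq0$ for a.e. $\theta\in(-\epsilon,\epsilon)$; equivalently, $(-\epsilon,\epsilon)\subseteq\text{supp}\,\hat\phi$ modulo some null-set $N$.

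Next I would dilate and take the union. For each $j\in\Z$ one has $2^j(-\epsilon,\epsilon)=(-2^j\epsilon,2^j\epsilon)$, and these intervals are nested and increasing with $2^j\epsilon\to\infty$ as $j\to+\infty$, so $\cup_{j\in\Z}(-2^j\epsilon,2^j\epsilon)=\R$. Applying $2^j$ to the inclusion $(-\epsilon,\epsilon)\setminus N\subseteq\text{supp}\,\hat\phi$ gives $2^j(-\epsilon,\epsilon)\setminus 2^jN\subseteq 2^j[\text{supp}\,\hat\phi]$, and taking the union over $j\in\Z$ yields $\R\setminus\big(\cup_j 2^jN\big)\subseteq\cup_{j\in\Z} 2^j[\text{supp}\,\hat\phi]$. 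Since $\cup_j 2^jN$ is a countable union of null-sets, it is null, so $\cup_{j\in\Z} 2^j[\text{supp}\,\hat\phi]=\R$ modulo a null-set, and Proposition \ref{rfma}(c) then gives $\overline{\cup_j V_j}=L^2(\R)$.

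There is essentially no deep obstacle here: the corollary is a direct consequence of Proposition \ref{rfma}(c). The only point demanding care is the null-set bookkeeping, namely observing that the exceptional set $N$ coming from the ``a.e.'' hypothesis spreads under the dilations to $\cup_j 2^jN$, which remains null precisely because the index set $\Z$ is countable and each dilate $2^jN$ of a null-set is again null.
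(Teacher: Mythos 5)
Your proposal is correct and follows essentially the same route as the paper: both reduce the statement to Proposition \ref{rfma}(c) by observing that the dyadic dilates of a neighbourhood of the origin cover $\R$, so the dilates of $\text{supp}\,\hat\phi$ do too (modulo a null set). Your version is in fact slightly more careful than the paper's one-line argument, since you make the null-set bookkeeping under dilation explicit.
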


\begin{proof}
Let $\Omega$ be a neighbourhood of the origin such that $\hat\phi$ is
nonzero a.e. in $\Omega$. Then, $\cup_j 2^j[\text{supp}\,\hat\phi]\subseteq \cup_j 2^j\Omega=\R$, and the result follows from proposition \ref{rfma}.c. 
\end{proof}

In general it is not required that $\phi$ be a ``good'' generator for $V_0$ in the sense that $\{T^k\phi:k\in\Z\}$ is a basis or a (pseudo-)frame for $V_0$.
When the nested sequence $\{V_j\}$ satisfies $\overline{\cup_{j\in\Z} V_j}=L^2(\R)$, one says that {\it  $\phi$ generates the (generalized) multiresolution analysis} (MRA) $\{V_j\}$ of $L^2(\R)$.

Now, let $\Psi=\{\psi^1,\ldots,\psi^s\}$ be a finite subset of $L^2(\R)$ and assume that $\Psi\subset V_1$. Obviously, this implies that there exist $1$-periodic measurable functions $H^{D^{-1}\psi^l}$ on $\R$ (or measurable $H_*^{D^{-1}\psi^l}$ on $\partial\D$), $l=1,\ldots,s$, the {\it wavelet masks}, such that
\be\label{rmaskx}
\widehat{[D^{-1}\psi^l]}(\theta)=H^{D^{-1}\psi^l}(\theta)\hat\phi(\theta)
=\left\{\begin{array}{ll} 
\ds \frac{[\widehat{[D^{-1}\psi^l]},\hat\phi](\theta)}{[\hat \phi,\hat\phi](\theta)}\hat\phi(\theta)\,,& \text{ for a.e. }\theta\in\sigma(\phi)\,, \\ 0\,,& \text{ for a.e. }\theta\notin\sigma(\phi)\,.\end{array}\right.\,.
\ee
\be\label{rmaskwx}
\widehat{[D^{-1}\psi^l]_*}(\om)=H_*^{D^{-1}\psi^l}(\om)\hat\phi_*(\om)
=
\left\{\begin{array}{ll}
\ds\frac{\<\widehat{[D^{-1}\psi^l]}_*(\om),\hat\phi_*(\om)\>_{l^2(\Z)}}{||\hat\phi_*(\om)||^2_{l^2(\Z)}}\,\hat\phi_*(\om)\,,& \text{ for a.e. }\om\in\sigma(\phi)\,,
\\
0\,,& \text{ for a.e. }\om\notin\sigma(\phi)\,.
\end{array}\right.
\ee

The {\it Unitary Extension Principle} (UEP), introduced by Ron and Shen \cite{RS97a,RS97b}, gives a sufficient condition for $X(\Psi)$ to be a tight frame. The condition is written in terms of the matrix functions
$$
H(\theta):=\left(
\begin{array}{cc} H^{D^{-1}\phi}(\theta) & H^{D^{-1}\phi}(\theta+1/2) \\ H^{D^{-1}\psi^1}(\theta) & H^{D^{-1}\psi^1}(\theta+1/2) \\ \vdots &\vdots \\ H^{D^{-1}\psi^s}(\theta) & H^{D^{-1}\psi^s}(\theta+1/2) \end{array}\right)
\quad\text{or}\quad
H_*(\om):=\left(
\begin{array}{cc} H^{D^{-1}\phi}_*(\om) & H^{D^{-1}\phi}_*(-\om) \\ H^{D^{-1}\psi^1}_*(\om) & H^{D^{-1}\psi^1}_*(-\om) \\ \vdots &\vdots \\ H^{D^{-1}\psi^s}_*(\om) & H^{D^{-1}\psi^s}_*(-\om) \end{array}\right)\,.
$$
The following result is a refined version of the UEP due to Benedetto and Trieber \cite[Theorem 1.7.1]{BT01}:

\begin{theor}[Unitary Extension Principle]\label{tuep}
Let $\phi\in L^2(\R)$ be a refinable function such that 
\be\label{ftpc0}
\lim_{\theta\to 0}\hat\phi(\theta)=1\,.   
\ee
Let $V_0=\overline{\text{span}}\{T^k\phi:k\in\Z\}$ and let $V_j=D^jV_0$, for $j\in\Z$. Consider a finite set $\Psi=\{\psi^1,\ldots,\psi^s\}\subset V_1$. If  
\be\label{ftpc0x}
H^*(\theta)H(\theta)=Id\quad\text{or}\quad  H_*^*(\om)H_*(\om)=Id\,,\quad \text{a.e. on }\sigma(\phi)\,,
\ee
then $X(\Psi)$ is a tight wavelet frame with frame bound $1$ for $L^2(\R)$.
\end{theor}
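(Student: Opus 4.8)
The plan is to reduce the statement to the operator identity $S=I_{L^2(\R)}$ and to establish it by a telescoping argument across scales, in which the role of the hypothesis (\ref{ftpc0x}) is confined to a single fiberwise ``one-level'' identity. First I would reduce to a dense subspace. By part~3 of Theorem~\ref{th1} (which applies verbatim to systems of the form (\ref{wsxx}), the index names $j,k$ being immaterial), $X(\Psi)$ is a tight frame with bound $1$ if and only if its frame operator $S$ equals $I_{L^2(\R)}$, and this follows once one shows
$$
\sum_{l=1}^{s}\sum_{j,k\in\Z}|\langle f,D^{j}T^{k}\psi^{l}\rangle_{L^2(\R)}|^2=\|f\|_{L^2(\R)}^2
$$
for every $f$ in a convenient dense subspace $\mathcal D$ (say, the $f$ with $\hat f$ bounded and compactly supported in $\R\setminus\{0\}$). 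Indeed, the telescoping below exhibits every partial sum over a finite range of scales as $\|P_{N+1}f\|^2-\|P_{-M}f\|^2\le\|f\|^2$, so a Fatou/density argument upgrades this first to the Bessel bound $1$ for all $f$, and then, $S$ being bounded and the quadratic form continuous, the identity extends from $\mathcal D$ to all of $L^2(\R)$.

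Second, I would rewrite each scale contribution by means of the periodized Fourier transform $\F_*$ of Proposition~\ref{ptft1}. Since $\langle f,D^{j}T^{k}\psi^{l}\rangle=\langle D^{-j}f,T^{k}\psi^{l}\rangle$ and $T$ acts as multiplication by $\om$ under $\F_*$, the sequence $\{\langle D^{-j}f,T^{k}\psi^{l}\rangle\}_k$ consists of the Fourier coefficients of the bracket product, whence
$$
\sum_{k}|\langle f,D^{j}T^{k}\psi^{l}\rangle|^2=\int_{\partial\D}\big|\langle\,\widehat{(D^{-j}f)}_*(\om),\widehat{\psi^{l}}_*(\om)\rangle_{l^2(\Z)}\big|^2\,\frac{d\om}{2\pi}.
$$
On the other hand, writing $P_j$ for the orthogonal projection onto $V_j$, the projection formula (\ref{ep0}) of Proposition~\ref{psis}, applied at scale $j$ via $P_j=D^{j}P_0D^{-j}$, expresses $\|P_jf\|^2$ as the integral over $\sigma(\phi)$ of $|\langle\widehat{(D^{-j}f)}_*,\hat\phi_*\rangle|^2/\|\hat\phi_*\|^2_{l^2(\Z)}$.

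The heart of the proof, and the step I expect to be the main obstacle, is the fiberwise one-level identity
$$
\frac{|\langle\widehat{(D^{-1}g)}_*(\om),\hat\phi_*(\om)\rangle|^2}{\|\hat\phi_*(\om)\|^2}-\frac{|\langle\hat g_*(\om),\hat\phi_*(\om)\rangle|^2}{\|\hat\phi_*(\om)\|^2}=\sum_{l=1}^{s}|\langle\hat g_*(\om),\widehat{\psi^{l}}_*(\om)\rangle|^2,
$$
valid for a.e.\ $\om$, which, integrated over $\partial\D$ with $g=D^{-j}f$, says exactly that $\|P_{j+1}f\|^2-\|P_jf\|^2=\sum_{l,k}|\langle f,D^{j}T^{k}\psi^{l}\rangle|^2$. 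To prove it I would insert the refinement and wavelet masks (\ref{rmaskw}) and (\ref{rmaskwx}), which write $\widehat{[D^{-1}\phi]}_*$ and $\widehat{[D^{-1}\psi^{l}]}_*$ as multiples of $\hat\phi_*$; splitting the periodization sum according to the parity of the translation index introduces the half-period modulation $\om\mapsto-\om$, that is, the two columns of the matrix $H_*$, and the hypothesis $H_*^{*}H_*=Id$ on $\sigma(\phi)$ is precisely what collapses the resulting cross terms and yields the stated equality. The delicate points are the bookkeeping of this dyadic modulation, the correct treatment of the fibers lying off $\sigma(\phi)$ (where $\hat\phi_*$ vanishes), and the justification of all interchanges of summation, which is why $\mathcal D$ is chosen with $\hat f$ compactly supported away from the origin.

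Finally, summing the one-level identity over $j\in\Z$ telescopes to
$$
\sum_{j\in\Z}\big(\|P_{j+1}f\|^2-\|P_jf\|^2\big)=\lim_{j\to\infty}\|P_jf\|^2-\lim_{j\to-\infty}\|P_jf\|^2.
$$
By part~(b) of Proposition~\ref{rfma} we have $\cap_jV_j=\{0\}$, so the lower limit is $0$; and the normalization (\ref{ftpc0}), $\lim_{\theta\to0}\hat\phi(\theta)=1$, makes $\hat\phi$ nonzero a.e.\ near the origin, so Corollary~\ref{ccws} gives $\overline{\cup_jV_j}=L^2(\R)$ and hence $P_j\to I_{L^2(\R)}$ strongly, i.e.\ the upper limit is $\|f\|^2$. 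Thus $\langle Sf,f\rangle=\|f\|^2$ on $\mathcal D$, which completes the reduction and proves that $X(\Psi)$ is a tight wavelet frame with frame bound $1$.
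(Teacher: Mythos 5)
The paper offers no proof of Theorem~\ref{tuep}: it is quoted from Benedetto and Treiber \cite{BT01}. So your argument has to stand on its own, and while its architecture --- reduction to $\langle Sf,f\rangle=\|f\|^2$ on a dense set, a one-level identity between consecutive scales, telescoping, and identification of the two limits --- is indeed the classical route to the UEP, the central step is wrong as you state it, and not just in its bookkeeping.

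The gap is that you telescope the projections $\|P_jf\|^2$, i.e.\ the \emph{normalized} quantities $\int_{\sigma(\phi)}|\langle\widehat{(D^{-j}f)}_*,\hat\phi_*\rangle|^2/\|\hat\phi_*\|^2_{l^2(\Z)}$ coming from (\ref{ep0}). The hypothesis $H_*^*H_*=Id$ is a condition on the masks alone, and what it actually delivers (after the even/odd splitting, which couples $\om$ with $-\om$; note also that your displayed identity cannot be fiberwise, since $\widehat{(D^{-1}g)}_*(\om)$ is assembled from the even components of $\hat g_*(\om^2)$, not from $\hat g_*(\om)$) is the identity for the \emph{unnormalized} brackets:
$$
\sum_{k}|\langle f,D^{j+1}T^k\phi\rangle|^2=\sum_{k}|\langle f,D^{j}T^k\phi\rangle|^2+\sum_{l=1}^{s}\sum_{k}|\langle f,D^{j}T^k\psi^l\rangle|^2\,,
$$
so the correct telescoping quantity is $G_j(f):=\sum_k|\langle f,D^jT^k\phi\rangle|^2=\int_{\partial\D}|\langle\widehat{(D^{-j}f)}_*(\om),\hat\phi_*(\om)\rangle|^2\,\frac{d\om}{2\pi}$, with no division by $\|\hat\phi_*(\om)\|^2_{l^2(\Z)}$. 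The two coincide only when $\{T^k\phi\}$ is a Parseval frame for $V_0$, which is not assumed. A symptom that your version must fail: you use (\ref{ftpc0}) only qualitatively, to get $\overline{\cup_jV_j}=L^2(\R)$ via Corollary~\ref{ccws}. But replacing $(\phi,\psi^1,\ldots,\psi^s)$ by $(c\phi,c\psi^1,\ldots,c\psi^s)$ leaves the spaces $V_j$, the masks, $\sigma(\phi)$ and condition (\ref{ftpc0x}) unchanged while multiplying the frame bound of $X(\Psi)$ by $|c|^2$; only (\ref{ftpc0}) is destroyed. Hence the normalization must enter quantitatively, and it does so exactly in $\lim_{j\to+\infty}G_j(f)=\int_\R|\hat f(\xi)|^2\,\lim_j|\hat\phi(2^{-j}\xi)|^2\,d\xi=\|f\|^2$, whereas $\|P_jf\|^2\to\|f\|^2$ holds for every normalization of $\phi$: applied to the Haar pair scaled by $c=2$, your telescoping would still ``prove'' frame bound $1$ while the true bound is $4$. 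To repair the proof, replace $\|P_jf\|^2$ by $G_j(f)$ throughout, derive the one-level identity from the filter/downsampling computation in which $H_*(\om)$ acts on the vector formed by the level-$(j+1)$ coefficient symbol at $\om$ and at $-\om$, and establish $G_j(f)\to\|f\|^2$ ($j\to+\infty$) and $G_j(f)\to0$ ($j\to-\infty$) directly on your dense class $\mathcal D$.
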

Condition (\ref{ftpc0}) is related with the completeness of the wavelet system through corollary \ref{ccws}. Due to (\ref{hfhft}), the two conditions in (\ref{ftpc0x}) coincide.

Restricting attention to compactly supported functions satisfying certain additional conditions, Chui and He \cite[Lemma 1]{CH00} give a complete characterization of what they call minimum-energy frames associated with a given refinable function, which is closely related to the UEP.

Fan, Heinecke and Shen \cite{FHS16} develop a {\it duality principle} based on the unitary equivalence of the frame operator and the Gramian of certain adjoint systems. When applied to fiberization techniques \cite{RS98,RS99}, the duality principle leads to simple methods of constructing dual wavelet frames. 
Since the adjoint system of a tight frame is an orthonormal sequence, the construction scheme reduces to complete a constant matrix so that its columns are pairwise orthogonal. Details about the connection between the unitary extension principle and the duality principle can be found in Fan, Ji and Shen \cite[Section 4]{FJS16}.

The UEP is subsequently extended by Daubechies, Han, Ron and Shen \cite{DRS03} and Chui, He and St\"ockler \cite{CHS02} in the form of the {\it Oblique Extension Principle} (OEP). The key idea is to consider different (equivalent) refinable functions the (homogeneous) wavelet system $X(\Psi)$ may be derived from. Perhaps the best analysis of the OEP is done by Han in \cite{H10} for dual pairs of nonhomogeneous wavelet systems in a distribution setting; see, in particular, \cite[Theorem 2, Theorem 9 and Corollary 10]{H10}. 

Given two subsets $\Phi$ and $\Psi$ of $L^2(\R)$ and an integer $J$, by a {\it nonhomogeneous wavelet system} $X_J(\Phi,\Psi)$ generated by $\Phi$ and $\Psi$ we mean a system of the form
$$
X_J(\Phi,\Psi):=\big\{D^{J}T^k\phi:\phi\in\Phi,\,k\in\Z\big\}\cup
\big\{D^{j}T^k\psi:\psi\in\Psi,\,k\geq J,\,k\in\Z\big\}\,.
$$
In particular, let us consider four finite sets of $L^2(\R)$:  
\be\label{fsphis}
\Phi=\{\phi^1,\ldots,\phi^r\}\,,\quad 
\Psi=\{\psi^1,\ldots,\psi^s\}
\quad\text{and}\quad
 %\ee
%\be\label{fspsis}
\tilde\Phi=\{\tilde\phi^1,\ldots,\tilde\phi^r\}\,,\quad %\,,
\tilde\Psi=\{\tilde\psi^1,\ldots,\tilde\psi^s\}\,.
\ee
The pair $X_J(\Phi,\Psi)$ and $X_J(\tilde\Phi,\tilde\Psi)$ of nonhomogeneous wavelet systems is called a {\bf dual pair of nonhomogeneous wavelet frames} for $L^2(\R)$ if, for all $f\in L^2(\R)$, 
\be\label{dfl2}
f=\sum_{i=1}^r\sum_{k\in\Z}\<f,D^JT^k\tilde\phi^i\>_{L^2(\R)}\,D^JT^k\phi^i+\sum_{i=1}^s\sum_{j\geq J}\sum_{k\in\Z}\<f,D^jT^k\tilde\psi^i\>_{L^2(\R)}\,D^jT^k\psi^i
\,.
\ee
The convergence in (\ref{dfl2}), and everywhere in what follows, is in $L^2$-sense.

By way of illustration, we include here two (unpublished) results by the authors without proof. These results lead to a version of the OEP similar to the Han's version \cite[Theorem 9 and Corollary 10]{H10}. Apart from the fact that all the reasoning is done inside the Hilbert space $L^2(\R)$, our work does not go beyond the subtle Han's distributional approach.

\begin{prop}\label{ll45}
Consider the four finite sets of $L^2(\R)$ given in (\ref{fsphis}). Then the following assertions are equivalent:
\begin{enumerate}
\item
For some $J\in\Z$, $X_J(\Phi,\Psi)$ and $X_J(\tilde\Phi,\tilde\Psi)$ form a  dual pair of nonhomogeneous wavelet frames for $L^2(\R)$.
\item
For all $J\in\Z$, $X_J(\Phi,\Psi)$ and $X_J(\tilde\Phi,\tilde\Psi)$ form a  dual pair of nonhomogeneous wavelet frames for $L^2(\R)$.
\item
For some $J\in\Z$ and for all $f\in L^2(\R)$,
\be\label{dfl2r}
\begin{array}{r}
\ds\sum_{i=1}^r\sum_{k\in\Z}\<f,D^JT^k\tilde\phi^i\>_{L^2(\R)}\,D^JT^k\phi^i+\sum_{i=1}^s\sum_{k\in\Z}\<f,D^JT^k\tilde\psi^i\>_{L^2(\R)}\,D^JT^k\psi^i=
\\[2ex]
\ds=\sum_{i=1}^r\sum_{k\in\Z}\<f,D^{J+1}T^k\tilde\phi^i\>_{L^2(\R)}\,D^{J+1}T^k\phi^i
\end{array}
\ee
and the following ``completeness condition" is satisfied:
\be\label{dfl2r1}
\lim_{J\to\infty} \sum_{i=1}^r\sum_{k\in\Z}\<f,D^JT^k\tilde\phi^i\>_{L^2(\R)}\,D^JT^k\phi^i=f\,,\quad (f\in L^2(\R))\,.
\ee
\item
For all $J\in\Z$ and $f\in L^2(\R)$, (\ref{dfl2r}) is satisfied, and also the completeness condition (\ref{dfl2r1}) is verified.
\end{enumerate}
\end{prop}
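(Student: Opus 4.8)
The plan is to prove the chain of equivalences by establishing $(2)\Rightarrow(1)$, $(4)\Rightarrow(3)$ (both trivial, since a statement for all $J$ implies it for some $J$), and then closing the loop through the implications $(1)\Rightarrow(3)$, $(3)\Rightarrow(4)$, and $(4)\Rightarrow(2)$. The conceptual core is a single \emph{refinement/telescoping} observation: the dilation invariance of the construction means that shifting $J$ by one merely peels off one layer of the wavelet sum and folds it into the coarse (scaling) part. Concretely, I would first record the ``one-step'' identity that the nonhomogeneous frame expansion at level $J$ equals the expansion at level $J+1$ \emph{precisely} when (\ref{dfl2r}) holds; this is what makes the dual-pair reproducing property (\ref{dfl2}) independent of $J$.

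First I would prove $(1)\Rightarrow(3)$. Assuming (\ref{dfl2}) holds for a particular $J$, I apply it also at level $J+1$ (which requires knowing (\ref{dfl2}) holds for $J+1$ too). The cleanest route is to exploit the dilation operator directly: applying $D$ to the whole identity (\ref{dfl2}) and using $D\,D^{j}T^k = D^{j+1}T^k$ together with the unitarity of $D$ in the inner products, one sees that (\ref{dfl2}) at level $J$ is \emph{equivalent} to (\ref{dfl2}) at level $J+1$. Hence from $(1)$ one gets (\ref{dfl2}) for two consecutive levels $J$ and $J+1$; subtracting the level-$(J+1)$ identity from the level-$J$ identity and cancelling the common high-frequency terms ($j\ge J+1$) yields exactly the refinement equation (\ref{dfl2r}). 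The completeness condition (\ref{dfl2r1}) then follows by iterating the refinement equation: writing (\ref{dfl2}) at level $J$ and letting the coarse part be the $J$th term, the tail $\sum_{j\ge J}$ must vanish in the limit $J\to\infty$ because $\bigcap_J V_J$-type arguments (or simply the convergence of the full series (\ref{dfl2})) force the residual to go to $f$.

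Next, $(3)\Rightarrow(4)$ is the heart of the matter and where I expect the main obstacle. The point is to upgrade ``for some $J$'' to ``for all $J$'' for the refinement equation. This is again a dilation argument: applying $D^{\pm 1}$ to (\ref{dfl2r}) and relabelling indices shows that (\ref{dfl2r}) at level $J$ implies it at level $J\pm1$, and hence at every level by induction. The completeness condition (\ref{dfl2r1}) is a limit statement that does not reference a base level, so once it holds it holds verbatim; the only care needed is that the dilation-shift of (\ref{dfl2r1}) is again (\ref{dfl2r1}) after the substitution $f\mapsto Df$, using density and boundedness. The genuine subtlety is justifying the interchange of the limit with the infinite sums and the term-by-term manipulation of the unconditionally convergent series --- here I would lean on the Bessel/frame bounds implicit in the dual-pair hypothesis (so that all synthesis operators are bounded) to guarantee the rearrangements are legitimate in the $L^2$ norm.

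Finally, $(4)\Rightarrow(2)$ closes the loop: telescoping the all-levels refinement equation (\ref{dfl2r}) from level $J$ upward expresses the finite coarse-plus-detail sum at level $N$ in terms of the coarse sum at level $N+1$, and iterating gives, for every fixed $J$,
\[
\sum_{i,k}\<f,D^JT^k\tilde\phi^i\>\,D^JT^k\phi^i
+\sum_{i}\sum_{j=J}^{N}\sum_{k}\<f,D^jT^k\tilde\psi^i\>\,D^jT^k\psi^i
=\sum_{i,k}\<f,D^{N+1}T^k\tilde\phi^i\>\,D^{N+1}T^k\phi^i\,.
\]
Letting $N\to\infty$ and invoking the completeness condition (\ref{dfl2r1}) on the right-hand side yields (\ref{dfl2}) at the arbitrary level $J$, i.e. the dual-pair property for all $J$, which is assertion $(2)$. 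I would present the telescoping step as a short induction on $N$ and reserve the analytic care (convergence of the detail sum as $N\to\infty$, which holds because the left side has a limit by (\ref{dfl2r1})) for a single remark rather than a belabored estimate.
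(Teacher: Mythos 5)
The paper does not actually prove Proposition \ref{ll45}: it is introduced explicitly as one of ``two (unpublished) results by the authors without proof,'' so there is no argument in the text to compare yours against. Judged on its own, your proof is correct and is the natural one. The two load-bearing observations are both present and sound: (i) the unitary conjugation $X_{J+1}(\Phi,\Psi)=D\,X_J(\Phi,\Psi)$, equivalently $R_{J+1}=D R_J D^{-1}$ for each partial reconstruction operator, which shows that the validity of (\ref{dfl2}) (resp.\ (\ref{dfl2r})) for all $f$ at one level is equivalent to its validity at every level, collapsing the ``some $J$'' versus ``all $J$'' distinction in both pairs of assertions; and (ii) the telescoping identity
$$
\sum_{i,k}\<f,D^JT^k\tilde\phi^i\>D^JT^k\phi^i+\sum_{i}\sum_{j=J}^{N}\sum_{k}\<f,D^jT^k\tilde\psi^i\>D^jT^k\psi^i=\sum_{i,k}\<f,D^{N+1}T^k\tilde\phi^i\>D^{N+1}T^k\phi^i\,,
$$
which together with (\ref{dfl2r1}) yields (\ref{dfl2}), and conversely, subtracting (\ref{dfl2}) at consecutive levels yields (\ref{dfl2r}) while the vanishing of the tails of the convergent detail series yields (\ref{dfl2r1}). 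Two small points of presentation: your appeal to ``$\bigcap_J V_J$-type arguments'' for the completeness condition is a distraction --- the parenthetical alternative (tails of a convergent series tend to zero in $L^2$) is the correct and sufficient justification, since no MRA structure is assumed in this proposition; and the rearrangement worries you defer to Bessel bounds are in fact unnecessary here, because every manipulation you perform is either the application of the bounded operator $D$ to a convergent series or the difference of two series that converge by hypothesis in the stated order.
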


\begin{theor}\label{ll45tx}
Let $\phi,\tilde\phi\in L^2(\R)$ be refinable functions. Let $V_0=\overline{\text{span}}\{T^k\phi:k\in\Z\}$, $\tilde V_0=\overline{\text{span}}\{T^k\tilde\phi:k\in\Z\}$ and $V_j:=D^jV_0$, $\tilde V_j:=D^j\tilde V_0$, $j\in\Z$.
Consider four finite sets of $L^2(\R)$ as given in (\ref{fsphis}) and such that 
$$
\Phi\subset V_0\,,\quad \Psi\subset V_1
\quad \text{and}\quad
\tilde\Phi\subset \tilde V_0\,,\quad \tilde\Psi\subset \tilde V_1\,,
$$
so that there exist masks $H_*^{D^{-1}\phi}$, $H_*^{\phi^i}$, $H_*^{D^{-1}\psi^l}$ relative to $V_0$ and masks $\tilde H_*^{D^{-1}\tilde\phi}$, $\tilde H_*^{\tilde\phi^i}$, $\tilde H_*^{D^{-1}\tilde\psi^l}$ relative to $\tilde V_0$.
Let us put
$$
\Theta(\om):=\sum_{i=1}^r H_*^{\phi^i}(\om)\,\overline{\tilde H_*^{\tilde\phi^i}(\om)}\,,\quad \text{for a.e. }\om\in\partial\D\,.
$$
Assume that 
\be\label{caSphi}
||\widehat{\phi^i_*}||_{l^2(\Z)}\cdot||\widehat{\tilde\phi^i_*}||_{l^2(\Z)}\in L^\infty(\partial\D)\,,\quad (i=1,\ldots,r)\,,
  \ee
\be\label{caSpsi}
||\widehat{\psi^l_*}||_{l^2(\Z)}\cdot||\widehat{\tilde\psi^l_*}||_{l^2(\Z)}\in L^\infty(\partial\D)\,,\quad (l=1,\ldots,s)\,.
\ee
Then, (\ref{dfl2r}) is satisfied for $J=0$ and all $f\in L^2(\R)$,
if and only if,
\be\label{oep1}
\begin{array}{r}
\ds\Theta(\om^2)\,H_*^{D^{-1}\phi}(\om)\,\overline{\tilde H_*^{D^{-1}\tilde\phi}(\om)}+\sum_{l=1}^s H_*^{D^{-1}\psi^l}(\om)\,\overline{\tilde H_*^{D^{-1}\tilde\psi^l}(\om)}=2\, \Theta(\om)\,,\\
\text{for a.e. }\om\in\sigma(\phi)\cap\sigma(\tilde\phi)\,,
\end{array}
\ee
and
\be\label{oep2}
\begin{array}{r}
\ds\Theta(\om^2)\,H_*^{D^{-1}\phi}(\om)\,\overline{\tilde H_*^{D^{-1}\tilde\phi}(-\om)}+\sum_{l=1}^s H_*^{D^{-1}\psi^l}(\om)\,\overline{\tilde H_*^{D^{-1}\tilde\psi^l}(-\om)}=0\,,\\
\text{for a.e. }\om\in\sigma(\phi)\text{ such that }-\om\in\sigma(\tilde\phi)\,.
\end{array}
\ee
\end{theor}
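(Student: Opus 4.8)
The plan is to turn the scalar reconstruction identity (\ref{dfl2r}) into an operator equation, conjugate it by the dilation so that every generator lies in $V_0$ or $\tilde V_0$, and then read off (\ref{oep1})--(\ref{oep2}) fibrewise in the periodized Fourier model of proposition \ref{ptft1}. First I would introduce the operators on $L^2(\R)$
\[
Pf=\sum_{i=1}^r\sum_{k\in\Z}\<f,T^k\tilde\phi^i\>\,T^k\phi^i,\qquad
Qf=\sum_{l=1}^s\sum_{k\in\Z}\<f,T^k\tilde\psi^l\>\,T^k\psi^l,\qquad
P_1=D\,P\,D^{-1},
\]
so that (\ref{dfl2r}) for $J=0$ is exactly $P+Q=P_1$. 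The hypotheses (\ref{caSphi})--(\ref{caSpsi}) guarantee the Bessel bounds that make $P$, $Q$ and $P_1$ bounded, so every series and bracket product below converges a.e. Using $TD=DT^2$ one has $D^{-1}T^k=T^{2k}D^{-1}$, whence $P+Q=P_1$ is equivalent to $P=D^{-1}PD+D^{-1}QD$, where now
\[
D^{-1}PD\,f=\sum_{i}\sum_{k}\<f,T^{2k}D^{-1}\tilde\phi^i\>\,T^{2k}D^{-1}\phi^i,\qquad
D^{-1}QD\,f=\sum_{l}\sum_{k}\<f,T^{2k}D^{-1}\tilde\psi^l\>\,T^{2k}D^{-1}\psi^l,
\]
and the generators $D^{-1}\phi^i,D^{-1}\psi^l$ (and their tildes) all lie in $V_0$, resp. $\tilde V_0$. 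Each of $P$, $D^{-1}PD$, $D^{-1}QD$ commutes with $T^2$, hence (proposition \ref{prop1vn}) is decomposable with respect to the diagonalizable operator $\om^2 I_{l^2(\Z)}$ of the model $\F_*$; equating their fibres over $\om^2$ reduces the identity to a relation between operators on the pair of fibres at $\pm\om$, which is the rigorous bridge from ``for all $f$'' to an a.e. statement.

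Next I would compute these fibres. For $P$ (all integer translates) the bracket identity of remark \ref{rm39} gives the fibre $P(\om)=\sum_i\<\cdot,\widehat{\tilde\phi^i}_*(\om)\>_{l^2(\Z)}\widehat{\phi^i}_*(\om)$; inserting the $V_0$-masks $\widehat{\phi^i}_*=H_*^{\phi^i}\hat\phi_*$ and $\widehat{\tilde\phi^i}_*=\tilde H_*^{\tilde\phi^i}\widehat{\tilde\phi}_*$ (available on $\sigma(\phi)$, resp. $\sigma(\tilde\phi)$, by proposition \ref{psis}) turns it into $\Theta(\om)\,\<\cdot,\widehat{\tilde\phi}_*(\om)\>\hat\phi_*(\om)$, and since $P$ commutes with $T$ its fibre over $\om^2$ is block-diagonal in the $\pm\om$ splitting. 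For an even-translate operator $\sum_k\<\cdot,T^{2k}\tilde a\>T^{2k}a$ the same computation keeps only even frequencies, producing the factor $\frac12$ and coupling $\om$ to $-\om$: its fibre over $\om^2$ is the $2\times 2$ block $\frac12\big(\widehat{a}_*(\varepsilon\om)\<\cdot,\widehat{\tilde a}_*(\varepsilon'\om)\>\big)_{\varepsilon,\varepsilon'=\pm}$. The crucial ingredient here is the composed two-scale relation
\[
\widehat{D^{-1}\phi^i}_*(\om)=H_*^{\phi^i}(\om^2)\,H_*^{D^{-1}\phi}(\om)\,\hat\phi_*(\om),
\]
obtained from $\widehat{\phi^i}_*=H_*^{\phi^i}\hat\phi_*$, from the action $[\F_*D^{-1}g]_k(\om)=2^{1/2}\hat g_{2k}(\om^2)$ of $D^{-1}$ in the model $\F_*$, and from the refinement relation (\ref{rmaskw}); the analogous relations hold for $\tilde\phi^i$, $\psi^l$, $\tilde\psi^l$. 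Because $H_*^{\phi^i}$ enters evaluated at $\om^2$, summing over $i$ reproduces exactly $\Theta(\om^2)$, and the identity $(-\om)^2=\om^2$ ensures the same factor appears in the off-diagonal entries.

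Finally I would match the fibres over $\om^2$ entry by entry. The diagonal $(\om,\om)$-entry equates $\Theta(\om)\,\<\cdot,\widehat{\tilde\phi}_*(\om)\>\hat\phi_*(\om)$ with $\frac12$ times the sum of the $\Phi$- and $\Psi$-contributions; cancelling the common nonzero rank-one factor $\<\cdot,\widehat{\tilde\phi}_*(\om)\>\hat\phi_*(\om)$, which is legitimate precisely on $\sigma(\phi)\cap\sigma(\tilde\phi)$, and clearing the $\frac12$, gives exactly (\ref{oep1}). The off-diagonal $(\om,-\om)$-entry, whose counterpart in the block-diagonal fibre of $P$ is $0$, gives (\ref{oep2}) after cancelling $\hat\phi_*(\om)\overline{\widehat{\tilde\phi}_*(-\om)}$, which is nonzero exactly where $\om\in\sigma(\phi)$ and $-\om\in\sigma(\tilde\phi)$. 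Outside these sets both sides vanish and no condition is imposed; the $(-\om,\cdot)$ entries merely repeat the same two conditions with $\om$ replaced by $-\om$, and since every step is an equivalence, reversing them yields the converse.

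I expect the main obstacle to be the fibre computation for the even-translate (refined) operators: correctly producing the factor $\frac12$ and the $\om\leftrightarrow-\om$ coupling, composing the masks so that $\Theta(\om^2)$ appears with the refinement masks evaluated at the right points $\om$ and $-\om$, and tracking the $2^{\pm1/2}$ normalisations that yield the constant $2$ in (\ref{oep1}). The purely functional-analytic steps—boundedness from (\ref{caSphi})--(\ref{caSpsi}) and the passage to fibres via proposition \ref{prop1vn}—are comparatively routine once these identities are in hand.
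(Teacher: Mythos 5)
The paper gives no proof of theorem \ref{ll45tx}: it is announced explicitly as one of ``two (unpublished) results by the authors without proof'', so there is nothing in the text to compare your argument against beyond the framework of sections \ref{sect2} and \ref{s5}. Measured against that framework, your outline is the natural one and the computations check out. Writing (\ref{dfl2r}) for $J=0$ as $P+Q=DPD^{-1}$ and conjugating to $P=D^{-1}PD+D^{-1}QD$ is correct; the fibre of $P$ is indeed $\Theta(\om)\,\<\cdot,\hat{\tilde\phi}_*(\om)\>\,\hat\phi_*(\om)$ by remark \ref{rm39} and corollary \ref{bvr2.14}; the even-translate operators do produce the blocks $\frac12\,\hat a_*(\varepsilon\om)\<\cdot,\hat{\tilde a}_*(\varepsilon'\om)\>$ because $\sum_k c_{2k}\om^{2k}=\frac12(g(\om)+g(-\om))$; and the composed relation $\widehat{[D^{-1}\phi^i]}_*(\om)=H_*^{\phi^i}(\om^2)\,H_*^{D^{-1}\phi}(\om)\,\hat\phi_*(\om)$ follows from $\widehat{[D^{-1}g]}(\theta)=2^{1/2}\hat g(2\theta)$ together with (\ref{rmaskw}). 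Matching diagonal and off-diagonal entries and cancelling the rank-one factors, which are nonzero exactly on $\sigma(\phi)\cap\sigma(\tilde\phi)$, respectively on $\{\om\in\sigma(\phi):-\om\in\sigma(\tilde\phi)\}$, yields (\ref{oep1}) and (\ref{oep2}) with the correct constant $2$.

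Two steps are weaker than you present them. First, (\ref{caSphi})--(\ref{caSpsi}) do \emph{not} give Bessel bounds for the individual shift systems $\{T^k\phi^i\}_k$, $\{T^k\tilde\phi^i\}_k$ (that would require each factor $||\widehat{\phi^i_*}||_{l^2(\Z)}$ separately in $L^\infty(\partial\D)$); they only bound the products, i.e.\ the fibre norms of the \emph{mixed} operators. Hence the series defining $P$, $Q$, $P_1$ need not converge unconditionally, and you must fix the sense in which (\ref{dfl2r}) and the operator identity $P+Q=P_1$ are to be read --- e.g.\ weakly, via $\sum_k\<f,T^k\tilde a\>\<T^ka,g\>=\int_{\partial\D}\<\hat f_*(\om),\hat{\tilde a}_*(\om)\>\<\hat a_*(\om),\hat g_*(\om)\>\,\frac{d\om}{2\pi}$, which itself requires an integrability argument, or by restricting first to a dense class of $f,g$. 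Second, ``equating fibres over $\om^2$'' is not covered by proposition \ref{prop1vn} as stated: an operator commuting with $T^2$ but not with $T$ is not decomposable in the model of proposition \ref{ptft1}, so you must construct the re-fibered direct integral over the doubled variable $\zeta=\om^2$ with fibre $l^2(\Z)\oplus l^2(\Z)$ (the dual-Gramian block structure) before the pointwise a.e.\ identification of $2\times 2$ blocks is legitimate. Both gaps are fillable by standard arguments, but they are precisely the ``subtle'' points the paper defers to Han's distributional treatment, so they should not be dismissed as routine.
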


\begin{coro}[Oblique Extension Principle]\label{ll45cx}
Under the conditions of Theorem \ref{ll45tx} (so that (\ref{caSphi}) and (\ref{caSpsi}) are assumed), the following assertions are equivalent:
\begin{enumerate}
\item
For some $J\in\Z$, $X_J(\Phi,\Psi)$ and $X_J(\tilde\Phi,\tilde\Psi)$ form a  dual pair of nonhomogeneous wavelet frames for $L^2(\R)$.
\item
For all $J\in\Z$, $X_J(\Phi,\Psi)$ and $X_J(\tilde\Phi,\tilde\Psi)$ form a  dual pair of nonhomogeneous wavelet frames for $L^2(\R)$.
\item
(\ref{dfl2r1}), (\ref{oep1}) and (\ref{oep2}) are satisfied.
\end{enumerate}
\end{coro}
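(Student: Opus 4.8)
The plan is to obtain the corollary as a direct synthesis of Proposition \ref{ll45} and Theorem \ref{ll45tx}, which between them already carry all the analytic content; what remains is to align their logical statements and to reconcile the quantifiers on $J$.

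First I would note that the equivalence of the first two assertions of the corollary is nothing but the equivalence of items 1 and 2 of Proposition \ref{ll45}, so the whole task reduces to proving that the first assertion is equivalent to the third. For the forward implication I would assume that $X_J(\Phi,\Psi)$ and $X_J(\tilde\Phi,\tilde\Psi)$ form a dual pair for some $J\in\Z$. By the implication $1\Rightarrow 4$ of Proposition \ref{ll45}, the refinement identity (\ref{dfl2r}) then holds for every $J$ and every $f\in L^2(\R)$, and the completeness condition (\ref{dfl2r1}) is satisfied. Specializing (\ref{dfl2r}) to $J=0$ and feeding it into the ``only if'' direction of Theorem \ref{ll45tx} converts it into the pair of mask equations (\ref{oep1}) and (\ref{oep2}); together with (\ref{dfl2r1}) these are exactly the conditions listed in the third assertion.

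For the converse I would run this backwards: starting from (\ref{dfl2r1}), (\ref{oep1}) and (\ref{oep2}), the ``if'' direction of Theorem \ref{ll45tx} recovers (\ref{dfl2r}) at $J=0$, and this, paired with (\ref{dfl2r1}), is precisely item 3 of Proposition \ref{ll45} (with the existential $J$ chosen to be $0$); the implication $3\Rightarrow 1$ then delivers the dual-frame property, closing the loop.

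The only point that needs care is the mismatch of quantifiers: Theorem \ref{ll45tx} is phrased solely for $J=0$, whereas Proposition \ref{ll45} speaks of ``some $J$'' and of ``all $J$''. This causes no difficulty because the completeness condition (\ref{dfl2r1}) is independent of $J$ and is exactly what links items 3 and 4 of Proposition \ref{ll45}; hence verifying (\ref{dfl2r}) at the single value $J=0$ (alongside (\ref{dfl2r1})) already propagates to all $J$. Consequently the genuine work is entirely absorbed into Theorem \ref{ll45tx}, where the operator identity (\ref{dfl2r}) is translated into the frequency-domain equations by means of the periodized Fourier transform and the integrability hypotheses (\ref{caSphi})--(\ref{caSpsi}); in the corollary itself no further estimation remains to be performed.
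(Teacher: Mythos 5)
Your proposal is correct and follows exactly the route the paper intends: the corollary is stated without a separate proof precisely because it is the direct synthesis of Proposition \ref{ll45} (which handles the quantifiers over $J$ and reduces the dual-frame property to (\ref{dfl2r}) plus (\ref{dfl2r1})) with Theorem \ref{ll45tx} (which converts (\ref{dfl2r}) at $J=0$ into (\ref{oep1}) and (\ref{oep2}) under (\ref{caSphi}) and (\ref{caSpsi})). Your handling of the quantifier mismatch via the equivalence of items 3 and 4 of Proposition \ref{ll45} is the right observation and completes the argument.
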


There are recent versions of the OEP for homogeneous wavelet systems $X(\Psi)$ of the form (\ref{wsxx}) too. For the sake of completeness, theorem \ref{oepams} below  reproduces an univariate version of the OEP due to Atreas, Melas and Stavropoulos \cite[Proposition 3.1]{AMS14}.

\begin{theor}[Oblique Extension Principle. Second version]\label{oepams}
Let $\phi\in L^2(\R)$ be a compactly supported refinable function satisfying:
\begin{enumerate}[(i)]
\item
$\hat\phi$ is continuous in a neighbourhood of the origin and verifies (\ref{ftpc0}).
\item
$||\hat\phi_*||_{l^2(\Z)}\in L^\infty(\partial\D)$.
\end{enumerate}
Let $V_0=\overline{\text{span}}\{T^k\phi:k\in\Z\}$ and $V_j:=D^jV_0$, $j\in\Z$.
Consider the finite set of $L^2(\R)$
$$
\Psi=\{\psi^1,\ldots,\psi^s\}\subset V_1\,.
$$
Then the following assertions are equivalent:
\begin{enumerate}[1.]
\item
$X(\Psi)$ is a tight wavelet frame with frame bound $1$ for $L^2(\R)$.
\item
There exists a measurable function $\Theta$ on $\partial\D$ such that
\begin{enumerate}[(a)]
\item
$\ds\lim_{j\to-\infty}\Theta(\om^{2^j})=1$ for a.e. $\om\in\partial\D$.
\item
One has 
\be\label{oep1x}
\begin{array}{r}
\ds\Theta(\om^2)\,H_*^{D^{-1}\phi}(\om)\,\overline{H_*^{D^{-1}\phi}(\om)}+\sum_{l=1}^s H_*^{D^{-1}\psi^l}(\om)\,\overline{H_*^{D^{-1}\psi^l}(\om)}=\Theta(\om)\,,\\
\text{for a.e. }\om\in\sigma(\phi)\,,
\end{array}
\ee
and
\be\label{oep2x}
\begin{array}{r}
\ds\Theta(\om^2)\,H_*^{D^{-1}\phi}(\om)\,\overline{H_*^{D^{-1}\phi}(-\om)}+\sum_{l=1}^s H_*^{D^{-1}\psi^l}(\om)\,\overline{H_*^{D^{-1}\psi^l}(-\om)}=0\,,\\
\text{for a.e. }\om\in\sigma(\phi)\text{ such that }-\om\in\sigma(\phi)\,.
\end{array}
\ee
\item
$\ds\int_{\partial\D}\Theta(\om)\,||\hat\phi_*(\om)||_{l^2(\Z)}^2\,\frac{d\omega}{2\pi}<\infty$.
\end{enumerate}
\end{enumerate}
\end{theor}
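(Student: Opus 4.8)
The plan is to deduce this homogeneous statement from the nonhomogeneous dual-pair machinery of Theorem \ref{ll45tx} and Corollary \ref{ll45cx}, specialized to the self-dual situation $\tilde\phi=\phi$, $\tilde\Psi=\Psi$, $\tilde\Phi=\Phi$. The conceptual bridge is that the tight-frame identity for $X(\Psi)$ decomposes scale by scale: writing $Q_jf:=\sum_{l}\sum_k\<f,D^jT^k\psi^l\>_{L^2(\R)}D^jT^k\psi^l$, the frame-bound-$1$ property reads $\sum_{j\in\Z}Q_j=I_{L^2(\R)}$. I would introduce a single scaling generator whose self-dual frame operator $P_J$ at level $J$ absorbs the tail $\sum_{j<J}Q_j$, so that $P_J+\sum_{j\geq J}Q_j=I_{L^2(\R)}$ becomes exactly the nonhomogeneous self-dual tight-frame identity (\ref{dfl2}) for $X_J(\Phi,\Psi)$ and $X_J(\tilde\Phi,\tilde\Psi)$. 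The function $\Theta$ of the statement is then identified with the periodized-Fourier symbol of $P_0$, i.e.\ with the fundamental function $\Theta(\om)=\sum_i H_*^{\phi^i}(\om)\,\overline{\tilde H_*^{\tilde\phi^i}(\om)}$ appearing in Theorem \ref{ll45tx}.

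With this identification, I would first check that, up to the dilation-induced factor $2$ and the normalization of $\Theta$, conditions (\ref{oep1x}) and (\ref{oep2x}) are precisely the self-dual specialization of (\ref{oep1}) and (\ref{oep2}): setting $\tilde H_*^{D^{-1}\tilde\phi}=H_*^{D^{-1}\phi}$ and $\tilde H_*^{D^{-1}\tilde\psi^l}=H_*^{D^{-1}\psi^l}$ collapses the two systems of equations into one. Next, condition (a), $\lim_{j\to-\infty}\Theta(\om^{2^j})=1$, I would match with the completeness condition (\ref{dfl2r1}), $\lim_{J\to\infty}P_Jf=f$: in the model of proposition \ref{ptft1} the operator $P_J$ acts by the symbol $\Theta(\om^{2^{-J}})$, so (a) is its pointwise limit as the scale refines, and it holds thanks to hypothesis (i) via Corollary \ref{ccws} and Proposition \ref{rfma}.c. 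To recover the full homogeneous sum I would then let $J\to-\infty$: Proposition \ref{rfma}.b gives $\cap_jV_j=\{0\}$, while hypothesis (ii), $||\hat\phi_*||_{l^2(\Z)}\in L^\infty(\partial\D)$, guarantees through Theorem \ref{th1} that the scaling part is Bessel, so $P_J\to0$ strongly and $\sum_{j\geq J}Q_j\to\sum_{j\in\Z}Q_j=I_{L^2(\R)}$.

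It remains to place the integrability condition (c), $\int_{\partial\D}\Theta(\om)\,||\hat\phi_*(\om)||_{l^2(\Z)}^2\,d\om/(2\pi)<\infty$, which is exactly what ensures that the scaling contribution encoded by $\Theta$ is a genuine element of $L^2(\R)$ and that the Bessel hypotheses (\ref{caSphi})--(\ref{caSpsi}) of Theorem \ref{ll45tx} are met in the self-dual case; compact support of $\phi$ together with (ii) keeps the relevant masks and autocorrelations well behaved. With these four ingredients in hand, Corollary \ref{ll45cx} yields the equivalence of the tight-frame property of $X(\Psi)$ with the existence of $\Theta$ obeying (a), (b), (c), and Proposition \ref{ll45} upgrades ``for some $J$'' to ``for all $J$'', closing the loop.

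The main obstacle will be the two-way identification of $\Theta$ with the symbol of $P_0$ together with the passage $J\to\pm\infty$. In the forward direction one must construct, from an abstract $\Theta$ satisfying (a), (b), (c), a legitimate $L^2(\R)$ scaling contribution whose masks reproduce (\ref{oep1x})--(\ref{oep2x}); in the backward direction one must extract $\Theta$ as the fundamental function of the given tight frame and verify that it inherits (a), (b) and especially the integrability (c). Controlling these limits --- showing $P_J\to I_{L^2(\R)}$ as $J\to\infty$ and $P_J\to0$ as $J\to-\infty$ in the appropriate operator topology --- is where compact support of $\phi$, the bound (ii), Proposition \ref{rfma} and Theorem \ref{th1} must all be combined, and is the technically delicate heart of the argument.
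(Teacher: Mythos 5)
A preliminary remark on the comparison you asked for: the paper gives no proof of theorem \ref{oepams} at all --- it is reproduced verbatim from \cite[Proposition 3.1]{AMS14} ``for the sake of completeness'' --- and the two results you lean on, proposition \ref{ll45} and theorem \ref{ll45tx}, are themselves announced in the paper as unpublished results stated without proof. So there is no argument in the text to match yours against; your proposal has to stand on its own, and as written it is an outline with two genuine gaps rather than a proof.

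The first gap is your treatment of condition (a). You claim that in the model of proposition \ref{ptft1} the scaling operator $P_J$ ``acts by the symbol $\Theta(\om^{2^{-J}})$'' and that (a) ``holds thanks to hypothesis (i) via corollary \ref{ccws} and proposition \ref{rfma}.c''. Neither statement is right. By proposition \ref{psis} the scaling part is fiberwise a rank-one operator along $\hat\phi_*(\om)$ weighted by $\Theta$, not a scalar multiplication; the quantity that actually converges is $\<P_Jf,f\>=\int|\hat f(\theta)|^2\,\Theta(e^{2\pi i2^{-J}\theta})\,|\hat\phi(2^{-J}\theta)|^2\,d\theta$ plus cross terms, and establishing (\ref{dfl2r1}) needs condition (a) \emph{together with} (\ref{ftpc0}) \emph{and} an argument killing the cross terms. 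Moreover (a) is a hypothesis on the unknown $\Theta$, not a consequence of refinability: corollary \ref{ccws} is a density statement about $\cup_jV_j$ and says nothing about these non-self-adjoint scale operators converging to the identity. The second gap is that both implications require constructions you only name. For $1\Rightarrow2$ you must exhibit $\Theta$ (the fundamental function, e.g.\ $\Theta(\om)=\sum_{j\ge0}\sum_{l}|H_*^{D^{-1}\psi^l}(\om^{2^j})|^2\prod_{m<j}|H_*^{D^{-1}\phi}(\om^{2^m})|^2$ in the spirit of \cite{DRS03}) and verify (a), (b) and especially the integrability (c) for it. For $2\Rightarrow1$ you must manufacture from an abstract $\Theta$ a legitimate scaling set $\Phi$ with $\sum_i|H_*^{\phi^i}|^2=\Theta$ --- which presupposes $\Theta\ge0$, a fact you never establish --- check that the resulting $\phi^i$ lie in $L^2(\R)$ (this is where (c) enters), track the factor $2$ mismatch between (\ref{oep1}) and (\ref{oep1x}), and verify the Bessel hypotheses (\ref{caSphi})--(\ref{caSpsi}) of theorem \ref{ll45tx} for the constructed generators and for the $\psi^l$: hypothesis (ii) bounds $\|\hat\phi_*\|_{l^2(\Z)}$, not $\|\widehat{\psi^l_*}\|_{l^2(\Z)}$, and boundedness of the wavelet masks is nowhere assumed. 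Until these steps are carried out, the reduction to corollary \ref{ll45cx} is a plan rather than a proof, and the homogeneous/nonhomogeneous passage it hinges on is exactly the content the paper delegates to \cite{APS16}.
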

When $\Theta=1$, (\ref{oep1x}) and (\ref{oep2x}) together just coincide with the condition (\ref{ftpc0x}) in the UEP (how to adjust the support $\sigma(\phi)$ in (\ref{ftpc0x}) is now clear).

The equivalence between homogeneous and nonhomogeneous wavelet frames has been established in \cite{APS16}. 

Using (\ref{rmaskw}) and (\ref{rmaskwx}), the conditions (\ref{oep1x}) and (\ref{oep2x}) read
\be\label{oep1xa}
\begin{array}{r}
\ds\Theta(\om^2)\,\big|\<\widehat{[D^{-1}\phi]}_*(\om),\hat\phi_*(\om)\>_{l^2(\Z)}\big|^2+\sum_{l=1}^s \big|\<\widehat{[D^{-1}\psi^l]}_*(\om),\hat\phi_*(\om)\>_{l^2(\Z)}\big|^2=\Theta(\om)\,||\hat\phi_*(\om)||^2_{l^2(\Z)}\,,\\
\text{for a.e. }\om\in\partial\D\,,
\end{array}
\ee
and
\be\label{oep2xa}
\begin{array}{r}
\ds\Theta(\om^2)\,\<\widehat{[D^{-1}\phi]}_*(\om),\hat\phi_*(\om)\>_{l^2(\Z)}\,\<\hat\phi_*(-\om),\widehat{[D^{-1}\phi]}_*(-\om)\>_{l^2(\Z)}+\\
\ds+\sum_{l=1}^s \<\widehat{[D^{-1}\psi^l]}_*(\om),\hat\phi_*(\om)\>_{l^2(\Z)}\,\<\hat\phi_*(-\om),\widehat{[D^{-1}\psi^l]}_*(-\om)\>_{l^2(\Z)}=0\,,\\
\text{for a.e. }\om\in\partial\D\,.
\end{array}
\ee

It is clear that the role of the ONBs $\{L_{i}^{(0)}(x)\}_{i\in\I}$ and $\{K_{\pm,j}^{(0)}(x)\}_{j\in\J}$, the coefficients $\alpha_{i,j}^{s,l,k}$ depend on, in the spectral formulas (\ref{sesg}) of corollary \ref{coro10} is supplied  by the refinable function $\phi$ in the extension principles given in theorem \ref{tuep}, corollary \ref{ll45cx} and theorem \ref{oepams}, in particular, in formulas (\ref{oep1xa}) and (\ref{oep2xa}).

%------------------------

\section*{Acknowledgements}
This work was partially supported by research projects MTM2012-31439 and MTM2014-57129-C2-1-P (Secretar\'{\i}a General de Ciencia, Tecnolog\'{\i}a e Innovaci\'on, Ministerio de Econom\'{\i}a y Competitividad, Spain).

%%---------------------------------
%\bibliographystyle{acm}%{abbrv}{plain}{alpha}
%\bibliography{F:/Nani/FGC_wavelet}

\begin{thebibliography}{10}

\bibitem{AMS14}
{\sc Atreas, N., Melas, A., and Stavropoulos, T.}
\newblock Affine dual frames and {E}xtension {P}rinciples.
\newblock {\em Appl. Comput. Harmon. Anal. 36}, 1 (2014), 51--62.

\bibitem{APS16}
{\sc Atreas, N.~D., Papadakis, M., and Stavropoulos, T.}
\newblock Extension principles for dual multiwavelet frames of
  {$L_2(\mathbb{R}^s)$} constructed from multirefinable generators.
\newblock {\em J. Fourier Anal. Appl. 22}, 4 (2016), 854--877.

\bibitem{BeLi98}
{\sc Benedetto, J.~J., and Li, S.}
\newblock The theory of multiresolution analysis frames and applications to
  filter banks.
\newblock {\em Appl. Comput. Harmon. Anal. 5}, 4 (1998), 389--427.

\bibitem{BT01}
{\sc Benedetto, J.~J., and Treiber, O.~M.}
\newblock Wavelet frames: multiresolution analysis and extension principles.
\newblock In {\em Wavelet transforms and time-frequency signal analysis}, Appl.
  Numer. Harmon. Anal. Birkh\"auser Boston, Boston, MA, 2001, pp.~3--36.

\bibitem{BS}
{\sc Birman, M.~S., and Solomjak, M.~Z.}
\newblock {\em Spectral theory of selfadjoint operators in {H}ilbert space}.
\newblock Mathematics and its Applications (Soviet Series). D. Reidel
  Publishing Co., Dordrecht, 1987.
\newblock Translated from the 1980 Russian original by S. Khrushch{\"e}v and V.
  Peller.

\bibitem{Chr}
{\sc Christensen, O.}
\newblock {\em Frames and bases}.
\newblock Applied and Numerical Harmonic Analysis. Birkh\"auser Boston, Inc.,
  Boston, MA, 2008.
\newblock An introductory course.

\bibitem{CH00}
{\sc Chui, C.~K., and He, W.}
\newblock Compactly supported tight frames associated with refinable functions.
\newblock {\em Appl. Comput. Harmon. Anal. 8}, 3 (2000), 293--319.

\bibitem{CHS02}
{\sc Chui, C.~K., He, W., and St{\"o}ckler, J.}
\newblock Compactly supported tight and sibling frames with maximum vanishing
  moments.
\newblock {\em Appl. Comput. Harmon. Anal. 13}, 3 (2002), 224--262.

\bibitem{DRS03}
{\sc Daubechies, I., Han, B., Ron, A., and Shen, Z.}
\newblock Framelets: {MRA}-based constructions of wavelet frames.
\newblock {\em Appl. Comput. Harmon. Anal. 14}, 1 (2003), 1--46.

\bibitem{BVR93}
{\sc de~Boor, C., DeVore, R.~A., and Ron, A.}
\newblock On the construction of multivariate (pre)wavelets.
\newblock {\em Constr. Approx. 9}, 2-3 (1993), 123--166.

\bibitem{BVR94a}
{\sc de~Boor, C., DeVore, R.~A., and Ron, A.}
\newblock Approximation from shift-invariant subspaces of {$L_2(\mathbf R^d)$}.
\newblock {\em Trans. Amer. Math. Soc. 341}, 2 (1994), 787--806.

\bibitem{BVR94b}
{\sc de~Boor, C., DeVore, R.~A., and Ron, A.}
\newblock The structure of finitely generated shift-invariant spaces in
  {$L_2({\bf R}^d)$}.
\newblock {\em J. Funct. Anal. 119}, 1 (1994), 37--78.

\bibitem{FHS16}
{\sc Fan, Z., Heinecke, A., and Shen, Z.}
\newblock Duality for frames.
\newblock {\em J. Fourier Anal. Appl. 22}, 1 (2016), 71--136.

\bibitem{FJS16}
{\sc Fan, Z., Ji, H., and Shen, Z.}
\newblock Dual {G}ramian analysis: duality principle and unitary extension
  principle.
\newblock {\em Math. Comp. 85}, 297 (2016), 239--270.

\bibitem{GS11b}
{\sc G{\'o}mez-Cubillo, F., and Suchanecki, Z.}
\newblock Inner functions and local shape of orthonormal wavelets.
\newblock {\em Appl. Comput. Harmon. Anal. 30}, 3 (2011), 273--287.

\bibitem{GS11a}
{\sc G{\'o}mez-Cubillo, F., and Suchanecki, Z.}
\newblock Spectral models for orthonormal wavelets and multiresolution analysis
  of {$L^2(\mathbb R)$}.
\newblock {\em J. Fourier Anal. Appl. 17}, 2 (2011), 191--225.

\bibitem{GSV12}
{\sc G{\'o}mez-Cubillo, F., Suchanecki, Z., and Villullas, S.}
\newblock Orthonormal {MRA} wavelets: spectral formulas and algorithms.
\newblock {\em Int. J. Wavelets Multiresolut. Inf. Process. 10}, 1 (2012),
  1250008, 19.

\bibitem{GV19-2}
{\sc G{\'o}mez-Cubillo, F., and Villullas, S.}
\newblock Univariate tight wavelet frames of minimal support.
\newblock {\em (submitted)\/}.

\bibitem{GV18}
{\sc G\'omez-Cubillo, F., and Villullas, S.}
\newblock Spectral algorithms for mra orthonormal wavelets.
\newblock In {\em Operator Theory, Operator Algebras, and Matrix Theory},
  vol.~267 of {\em Operator Theory: Advances and Applications}. Birkh\"auser,
  Cham, Switzerland, 2018, pp.~185--198.

\bibitem{H10}
{\sc Han, B.}
\newblock Pairs of frequency-based nonhomogeneous dual wavelet frames in the
  distribution space.
\newblock {\em Appl. Comput. Harmon. Anal. 29}, 3 (2010), 330--353.

\bibitem{KRI97}
{\sc Kadison, R.~V., and Ringrose, J.~R.}
\newblock {\em Fundamentals of the theory of operator algebras. {V}ol. {I}},
  vol.~15 of {\em Graduate Studies in Mathematics}.
\newblock American Mathematical Society, Providence, RI, 1997.
\newblock Elementary theory, Reprint of the 1983 original.

\bibitem{KRII97}
{\sc Kadison, R.~V., and Ringrose, J.~R.}
\newblock {\em Fundamentals of the theory of operator algebras. {V}ol. {II}},
  vol.~16 of {\em Graduate Studies in Mathematics}.
\newblock American Mathematical Society, Providence, RI, 1997.
\newblock Advanced theory, Corrected reprint of the 1986 original.

\bibitem{RS95}
{\sc Ron, A., and Shen, Z.}
\newblock Frames and stable bases for shift-invariant subspaces of
  {$L_2(\mathbf R^d)$}.
\newblock {\em Canad. J. Math. 47}, 5 (1995), 1051--1094.

\bibitem{RS97b}
{\sc Ron, A., and Shen, Z.}
\newblock Affine systems in {$L_2({\bf R}^d)$}. {II}. {D}ual systems.
\newblock {\em J. Fourier Anal. Appl. 3}, 5 (1997), 617--637.
\newblock Dedicated to the memory of Richard J. Duffin.

\bibitem{RS97a}
{\sc Ron, A., and Shen, Z.}
\newblock Affine systems in {$L_2(\mathbf R^d)$}: the analysis of the analysis
  operator.
\newblock {\em J. Funct. Anal. 148}, 2 (1997), 408--447.

\bibitem{RS98}
{\sc Ron, A., and Shen, Z.}
\newblock Compactly supported tight affine spline frames in {$L_2(\mathbf
  R^d)$}.
\newblock {\em Math. Comp. 67}, 221 (1998), 191--207.

\bibitem{RS99}
{\sc Ron, A., and Shen, Z.}
\newblock Construction of compactly supported affine frames in {$L_2({\bf
  R}^d)$}.
\newblock In {\em Advances in wavelets ({H}ong {K}ong, 1997)}. Springer,
  Singapore, 1999, pp.~27--49.

\bibitem{RS05}
{\sc Ron, A., and Shen, Z.}
\newblock Generalized shift-invariant systems.
\newblock {\em Constr. Approx. 22}, 1 (2005), 1--45.

\bibitem{Raf73}
{\sc Rudin, W.}
\newblock {\em Functional analysis}.
\newblock McGraw-Hill Book Co., New York-D\"usseldorf-Johannesburg, 1973.
\newblock McGraw-Hill Series in Higher Mathematics.

\end{thebibliography}
%%---------------------------------

\end{document}